 \colorlet{Mycolor0}{gray!10!white!90!}
 \colorlet{Mycolor1}{gray!50!white!50!}
 \colorlet{Mycolor2}{gray!50!white!40!yellow!10!}
\newcommand{\ds}{\displaystyle}
\newtheorem{thm}{Theorem}[section]
\newtheorem{prop}[thm]{Proposition}
\newtheorem{corol}[thm]{Corollary}
\newtheoremstyle{obs}
  {3pt}
  {3pt}
  {}
  {}
  {\bfseries}
  {.}
  {.5em}
  {}
\theoremstyle{obs}
\newtheorem{remark}[thm]{Remark}
\newtheorem{defn}[thm]{Definition}
\newtheorem{example}[thm]{Example}
\newtheorem{Ass}[thm]{Assumption}
\def\qed{\ifvmode\removelastskip\fi
{\unskip\nobreak\hfil\penalty50\hbox{}\nobreak\hfil \hbox{\vrule
height1.2ex width1.2ex}\parfillskip=0pt \finalhyphendemerits=0
\par \smallskip}}
\newcommand{\R}{\mathbb{R}}
\title{{\bf Global controllability tests for geometric hybrid control systems}}
\author[1]{M. Barbero Li\~n\'an}
\author[3]{J. Cort\'es}
\author[2]{D. Mart\'in de Diego}
\author[3]{S. Mart\'inez}
\author[4]{M. C. Mu\~noz Lecanda}
\affil[1]{\small Departamento de Matem\'atica Aplicada, Universidad Polit\'ecnica de Madrid,   Av. Juan de Herrera 4, 28040 Madrid, Spain}
\affil[2]{\small Instituto de Ciencias Matem\'aticas (CSIC-UAM-UC3M-UCM), C/Nicol\'as Cabrera 13-15, 28049 Madrid, Spain}
\affil[3]{\small Department of Mechanical and Aerospace Engineering, University of California, San Diego, USA}
\affil[4]{\small Department of Mathematics, Universidad Polit\'ecnica de Catalu\~na, Edificio C--3, Campus Norte, C/Jordi Girona 1, 08034, Barcelona, Spain}
\date{\today}
\begin{document}

\maketitle

\begin{abstract} 
	This paper introduces a novel geometric framework to define and study hybrid systems. We exploit the geometry and topology of the set of jump points, where the instantaneous change of dynamics takes place, in order to gain controllability for the system. This approach allows us to describe new global controllability tests for hybrid control systems. We illustrate these results with several examples where none of the continuous control systems are controllable, but the associated hybrid system is controllable because of the characteristics of the jump set.

\end{abstract}

\section{Introduction}\label{Sec:intro}

A dynamical system is described by a set of differential equations. When the equations depend on controls we can actuate over the system to obtain a specific objective. However, many systems around us cannot be described by a simple dynamical system, for instance, car transmission, thermostats, bipedal walkers~\cite{AmesBipedal}, electric vehicles~\cite{ElectricVehicle}, multifingered robots~\cite{Multifinger}, etc. Those systems can only be described by  using a family of dynamical systems allowing instantaneous changes (called jumps) in the dynamics among them under some particular conditions. These systems are known as (control) hybrid systems and they consist of an interaction among different continuous control systems through instantaneous jumps.

Recently, mathematicians~\cite{Brogliato,GoebelBook,Lerman,
Liberzon,HybridOpt,VanDerSchaft} have focused on
the description of hybrid systems in order to bring more understanding to all the possible case studies. The difficulty in handling these systems is given by the interaction among all the continuous dynamical systems plus all the possible instantaneous jumps. 
This can be described by a directed graph as the one in Figure~\ref{Fig:HS}, where each node represents a different dynamics (denoted by the small letter) and the edges determine the feasible transitions.

\begin{figure}[h]
	\centering \includegraphics[scale=0.9]{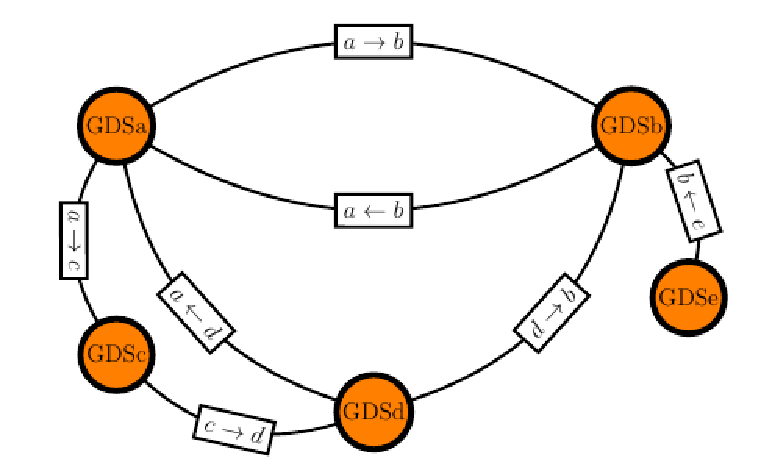}
	\caption{Directed graph associated with a hybrid system. Nodes represent dynamics modes called Generalized Dynamical Systems (GDS), whereas the edges represent transitions between them.} \label{Fig:HS}
\end{figure}

For control systems, it is important to discuss their accessibility and controllability properties~\cite{2005BulloLewisBook,NijmeijerVanDerSchaft}. These properties are related with the topology of the reachable set, that is, the set of points that can be reached by admissible trajectories. Global controllability means that there is an admissible trajectory between any two points in the configuration manifold. When instantaneous jumps appear, it is necessary to guarantee that any two discrete states can be joined through the directed graph associated with the hybrid system. That leads to the notion of discrete controllability and properties from graph theory are needed to study it. Once the discrete controllability has been checked, the controllability of the global system has to be studied. In the literature, there are some results on the topic under strong assumptions, such as at any point of the configuration manifold the system can change from one dynamics to another~\cite{2002BulloZefran}. However, that assumption allows to study the system almost as a mechanical system with many more control input vector fields, the ones coming from all the possible dynamics. That is why we have decided to geometrize the notion of hybrid control systems in order to study global controllability
having in mind the geometry and the topology of the set of jump points. We show in this paper that the instantaneous jumps can contribute to achieve global controllability under some assumptions. That idea was mentioned in~\cite{2007YangBlanke}, but only applied to study control linear systems such as $\dot{x}=Ax+Bu$ and from an algorithmic viewpoint. In this paper we look into the geometry of the set of jump points and the leaves described by the nonlinear control systems, such as $\dot{x}=f(x,u)$ where $x$ denotes the positions and $u$ the controls, to obtain global controllability results for hybrid control systems.

Setting the foundations to describe geometrically hybrid systems has the ultimate goal of providing a framework suitable to extend geometric methods used to solve tracking problems, optimal control problems, geometric integration for mechanical control systems to the hybrid control world.

The paper is organized as follows: We first introduce the notion of generalized dynamical system in Section~\ref{Sec:GenDynSystem} to introduce the new geometric framework to describe hybrid systems in Section~\ref{Sec:GenHybSyst}. The particular case of hybrid control systems with controls in the continuous dynamics is described in Definition~\ref{Def:GHCS}. Section~\ref{Sec:ControlTestHCS} contains the novel  controllability tests for hybrid control systems. 
We provide algorithmic
and geometric results,
and rewrite them infinitesimally for control-linear and control-affine systems using the Orbit Theorem~\cite{AndrewNotes,S73}. Examples are provided along the text to highlight how the jump map can contribute to gain controllability of the total system when each of the continuous dynamics independently is not controllable. Appendix~\ref{App:HS} reviews the known definitions of hybrid systems used in the literature~\cite{GoebelBook,
VanDerSchaft} to establish the analogies and similarities with the geometric description of hybrid systems given in this paper.

\section{Generalized dynamical system}\label{Sec:GenDynSystem}

For the description of a hybrid system,  we first need to introduce the constitutive pieces that we call generalized dynamical systems. These systems cover many cases of interest in geometric mechanics and control theory. We believe that they are the most suitable element to geometrize hybrid systems as explained in the sequel.

A \textit{generalized dynamical system} is characterized by a fiber bundle $\tau_E: E\rightarrow M$ equipped with a differentiable map $\rho: E\rightarrow TM$ and a submanifold $D$ of $E$ (possibly with corners)
  that projects onto $M$ by $\tau_D\colon=\tau_{E|D}=\tau_M\circ \rho_{|D}\colon D \rightarrow M$, where $\tau_M: TM\rightarrow M$ is the canonical projection of the tangent bundle.
 We write the generalized dynamical system as the quadruple $(E, M, \rho, D)$.

A curve $\gamma: I\subseteq {\mathbb R}\rightarrow D$ is a \textit{solution of the generalized dynamical system} $(E, M, \rho, D)$ if
 \[
\frac{d\sigma}{dt}= \rho(\gamma(t))\, ,
 \]
 where $\sigma:  I\subseteq {\mathbb R}\rightarrow M$ is the projection by $\tau_E$ of $\gamma$, that is, $\sigma=\tau_E\circ \gamma$.

Now, we give a description in coordinates to understand better the meaning of  a generalized dynamical system. In coordinates $(x^i)$ on $M$ and fiber coordinates $(x^i, y^{\alpha})$ on $E$, we have the following local expressions:  $\tau_E(x^i, y^{\alpha})=(x^i)$ and
$\rho(x^i, y^\alpha)=(x^i, f^i(x, y))$. Additionally assume that $D$ is defined by some inequality constraint functions $\Phi^l(x, y)\geq 0$.  Locally, a solution curve $\gamma(t)=(x^i(t), y^{\alpha}(t))$ satisfies the following system of differential equations subject to inequality constraints:
\begin{equation}\label{adf}
\left\{\begin{array}{rcl}
\displaystyle\frac{dx^i}{dt}(t)&=&f^i(x(t), y(t))\, ,\\
0&\leq&\Phi^l(x(t),y(t))\, .
\end{array}\right.
\end{equation}
If the inequality is an equality,~\eqref{adf} is a system of differential-algebraic equations (DAEs). 
Such systems appear for instance when considering nonholonomic mechanics, port-Hamiltonian systems as the one in Example~\ref{Ex:PHsyst} and systems with impact, as the simple bouncing ball in Example~\ref{Ex:rebote}. Moreover, Example~\ref{Ex:AffineLie} has some velocities restricted by the affine subspace described by the drift and input vector fields. The second set of constraints in~\eqref{adf} also appears in Example~\ref{Ex:control} for the controls, in Examples~\ref{Ex:theory},~\ref{Ex:2HCS} and in  Section~\ref{SubS:Ex} for the controls and states. 

We provide here some illustrative examples to make clear the notion of generalized dynamical systems.

\begin{example}{\bf Integral curves of vector fields.}\label{adgh}
{\rm
A typical dynamical system is given by a vector field $X$ on $M$, $X\in{\mathfrak X}(M)$. In this case $E=M$,  $D=M$, $\rho=X: M\rightarrow TM$ and the generalized dynamical system is described by $(M,M, X,M)$.
The integral curves of the vector field fulfills the differential equation:
\[
\frac{dx}{dt}=X(x)\, .
\]

}
\end{example}

\begin{example}{\bf Nonlinear control systems.}\label{Ex:control}
Consider $E=M\times  {\mathbb R^k}$ where $\tau_E=\hbox{pr}_1: E=M\times  {\mathbb R^k}\rightarrow M$ is the projection onto the first factor. The space $E$ plays the role of the control bundle.
The generalized dynamical system is given by $(M\times \R^k, M, \rho, M\times U)$ where $U\subseteq {\mathbb R^k}$ is the set of admissible controls and $\rho:
M\times \R^k\rightarrow TM$ describes the control equations, $\rho(x, u)=(x, f(x, u))$. The equations of motion are given by
\[
\frac{dx}{dt}=f(x,u), \quad u \in U\, .
\]
Typically, the control set contains the origin and is closed and bounded. Depending on the nature of the control set, some additional constraints could appear in~\eqref{adf}. For instance, if $U=[a_1,b_1]\times \cdots \times [a_k,b_k]$, then the system~\eqref{adf} will include the inequality constraints $a_s\leq u_s\leq b_s$ for $s=1,\dots,k$.

\end{example}

\begin{example}\label{Ex:AffineLie} {\rm
		{\bf Affine left-invariant  control systems on a Lie group.}
Let $G$ be a Lie group and ${\mathfrak g}$ its Lie algebra. Given an affine subspace 
\[
\Gamma=\{\xi_0+\sum_{i=1}^m u_i \xi_i\; |\; \xi_i\in {\mathfrak g}, i=0, \ldots, m\}
\]
Then the generalized dynamical system  is described  by
$(G\times {\mathfrak g} , G, \rho, G\times \Gamma)$ and 
$\rho (g, \xi)=(g, g\xi)$ and the equations of motion are
\[
\frac{dg}{dt}=g (\xi_0+\sum_{i=1}^m u_i \xi_i)
\]
Observe that the  controllability properties of the system are studied in terms of $D= G\times \Gamma$. This is one of the reasons for describing the structure of the generalized dynamical systems by four components, that is,  $(E, M, \rho, D)$.}
\end{example}

	\begin{example}\label{Ex:PHsyst} {\rm
			{\bf Port-Hamiltonian systems.}
			The study  of port-Hamiltonian systems provides an adequate  framework for the  description of network models of physical systems.
			These systems are described by the following set of equations: 
			\begin{equation}\label{port}
			\begin{split}
			\dot{x}&=\left[J(x)-R(x)\right]\frac{\partial H}{\partial x}+g(x)u\\
			y&=g^T(x)\frac{\partial H}{\partial x}
			\end{split}
			\end{equation}
			where the matrix $J$ is skew-symmetric, the matrix $R$ satisfies $R(x)=R^T(x)\geq 0$, $g$ is the input matrix and $u$ and  $y$ are the input and output variables. 
			This type of equations also admits a representation as a generalized dynamical system, where now $(x, u)$ are  coordinates for $D$, $(x,u, \dot{x}, y)$ are coordinates for $E$ and $x$ coordinates for $M$. 
			Here, $D$ is represented as a  submanifold of $E$ given by the constraints represented by Equations~\eqref{port} and 
			$\rho(x, u)=(x, 	\dot{x})=\left[J(x)-R(x)\right]\frac{\partial H}{\partial x}+g(x)u)$.
		}
	\end{example}

\begin{example} \label{el-eq} {\rm
{\bf Euler-Lagrange equations   \cite{AMBook}.}
For a regular Lagrangian function $L: TQ\rightarrow {\mathbb R}$ we know that the solutions to Euler-Lagrange equations are the integral curves of a vector field $\xi_L\in {\mathfrak X}(TQ)$
which is a second-order differential equation. In canonical coordinates $(q^i, v^i)$ on $TQ$:
\[
\xi_L=v^i\frac{\partial }{\partial q^i}+F_L^i(q, v)\frac{\partial}{\partial v^i}
\]
where
\[
F_L^i(q, v)=W^{ij}\left(\frac{\partial L}{\partial q^j}-\frac{\partial^2 L}{\partial v^j\partial{q}^k}v^k \right)\, ,
\]
$(W^{ij})$ is the inverse matrix of the hessian matrix $\left(W_{ij}=\frac{\partial^2 L}{\partial v^i\partial v^j}\right)$. Therefore, this case is a generalized dynamical system as the ones considered in  Example \ref{adgh}, where now $M=TQ$ and $\rho=\xi_L: M\rightarrow TM$, that is, $(TQ, TQ, \xi_L, TQ)$. The equations of motion are:
\begin{equation*}\label{adf-1}
\left\{\begin{array}{rcl}
\displaystyle\frac{dq}{dt}&=&v\, ,\\ \\
\displaystyle\frac{dv}{dt}&=&F_L(q, v)\, ,
\end{array}\right.
\end{equation*}
which are equivalent to the classical Euler-Lagrange equations for a regular Lagrangian:
\[
\frac{d}{dt}\left(\frac{\partial L}{\partial v^i}\right)-\frac{\partial L}{\partial q^i}=0\; .
\]
}
\end{example}

\section{Geometric hybrid systems}\label{Sec:GenHybSyst}

The adjective hybrid indicates mixed character of an object. The hybrid systems include different generalized dynamical systems and relationships among them by means of transitions from one particular generalized dynamical system to another. We build on the models for hybrid systems proposed in~\cite{GoebelBook,Lerman,
Liberzon,VanDerSchaft} to provide a geometric framework to reason about hybrid systems. Those previous frameworks in the literature have been summarized in Appendix~\ref{App:HS} to show that they are included in the geometric framework described here. 
Our approach allows us  a clearer analysis of the controllability of a hybrid system and include in a natural and geometric way nonholonomic mechanical systems defined on manifolds in the hybrid world, together with ODE systems. This opens the way to bring into the picture geometric techniques for the reduction under symmetry and the discretization of hybrid systems.

Here we extend the geometric framework considered in~\cite{VanDerSchaft} to describe hybrid systems using our notion of generalized dynamical system. As described below, set--valued maps are needed for introducing hybrid systems. More information on those maps can be found, for instance, in~\cite{BookSetValued}.

\begin{defn}\label{Def:GHS} A \textit{geometric hybrid system} is a six tuple $\Sigma=(A,E,M,\rho,D,R)$ where
\begin{itemize}
\item $A$ is a finite set of discrete modes associated with the different generalized dynamical  systems, that is, there are as many discrete modes as different continuous dynamics the system has.  Each discrete mode is denoted by $a$.
\item For each $a\in A$ there is a generalized dynamical system $\Sigma_a=(E_a,M_a,\rho_a,D_a)$ obtained from the following elements:
\begin{itemize}
\item $E$ is a global space where all the objects of the system are well-defined.
$E$ is a fiber bundle over a manifold $M$ with projection $\tau_E: E\rightarrow M$. Moreover,   both spaces
 $E$ and $M$ are fibered over $A$ with projection $\varphi_E: E\rightarrow A$ and  $\varphi_M: M\rightarrow A$ satisfying
 $\varphi_E=\varphi_M\circ \tau_E$.
The fibers of $\varphi_E$ and $\varphi_M$ are denoted by $E_a=\varphi_E^{-1}(a)$ and $M_a=\varphi_M^{-1}(a)$, respectively. For each $a\in A$ we describe  $M_a$ by coordinates $(x^{i_a})$ and $E_a$ by fibered coordinates
$(x^{i_a},y^{\alpha_a})$.
\item We have a map $\rho: E\rightarrow TM$ such that $\varphi_E=\varphi_M\circ \tau_M\circ \rho$, where $\tau_M\colon TM\rightarrow M$ is the canonical projection of the tangent bundle. Thus, for each $a$ we have a well defined map $\rho_a: E_a\rightarrow TM_a$ with local expression $\rho(x^{i_a},y^{\alpha_a})=(x^{i_a}, f^{i_a}_{(a)} (x, y))$.
\item $D$ defines the continuous dynamics as a submanifold (possible with corners and boundaries) of $E$, it also  fibers onto $A$ with projection $\varphi_D\colon D \rightarrow A$. Locally, we assume that the fiber  $D_a=\varphi_D^{-1}(a)$ is described by a set of inequality constraints $\Phi_{(a)}^{l_a}(x^{i_a}, y^{\alpha_a})\geq 0$.
\end{itemize}
\item $R\colon D\rightrightarrows D$ is a set--valued map called jump map such that every point in $D$ where a change of dynamics can take place is sent to
a group of points in the image of $D$ called successors. If a point in $D$ is not a jump point, then $R$ is not defined. That transition or jump could involve a change of the discrete mode and/or the initial condition for the dynamics associated with the next discrete mode because the hybrid trajectories are not assumed to be continuous, as described later. The graph of $R$, ${\rm Graph}\, R$, is a subset of $D\times D$ that carries  the information of the set of points  where jumps take place and all the corresponding successors.
\end{itemize}
\end{defn}

Locally, for each $a\in A$ the corresponding generalized dynamical system has equations of motion analogous to the ones in Equation \eqref{adf}:
\begin{equation}\label{adf_1}
\left\{\begin{array}{rcl}
\displaystyle\frac{dx^{i_a}}{dt}&=&f^{i_a}_{(a)}(x, y)\, , \\
0&\leq&\Phi^{l_a}_{(a)}(x,y)\, .
\end{array}\right.
\end{equation}

  All the information needed to define a geometric hybrid system is contained in the following diagram:
\begin{equation}\xymatrix{&& S \ar@{^{(}->}[rrd]   && && && TM \ar[d]^{\txt{\small{$\tau_{M}$}}} \\ {\rm Graph}\, R\ar[urr]^{\txt{\small{${\rm pr}_2$}}}
	\ar[drr]^{\txt{\small{${\rm pr}_1$}}} && && D \ar@{^{(}->}[rr] \ar[drr]_{\txt{\small{$\varphi_{D}$}}} && E \ar[urr]^{\txt{\small{$\rho$}}}  \ar[rr]^{\txt{\small{$\tau_{E}$}}} \ar[d]^{\txt{\small{$\varphi_{E}$}}} && M  \ar[dll]^{\txt{\small{$\varphi_{M}$}}} \\   && B  \ar@{^{(}->}[urr] &&  && A &&
}\label{Diagram:GHS} \end{equation}
where ${\rm pr}_i\colon {\rm Graph}\, R\subset D\times D \rightarrow D$ is the projection onto the $i$th factor and
\begin{equation}\label{Def:SB}
S=\{d \in D\, : \,\exists\; \widetilde{d}\in D \mbox{ such that } d=R(\widetilde{d})\} \; , \quad B=\{\widetilde{d}\in D\, : \,\exists\; d\in D \mbox{ such that } d=R(\widetilde{d})\}\, ,
\end{equation}
are the set of successors and the set of jump points, respectively. These two sets can also be described as $S={\rm pr}_2({\rm Graph}\, R )$ and $B={\rm pr}_1({\rm Graph}\, R)$.

As soon as the trajectory hits $B$, the jump map $R$ is active indicating the possibility of a change of dynamics, that is, prescribing the possible points from which the trajectory continues to evolve according to a different continuous dynamics. 

In order to handle better geometric hybrid systems, it is convenient to introduce a few more notions  to be used in the sequel. Having in mind the elements introduced in Definition~\ref{Def:GHS}, we have:
\begin{itemize}
\item As $B$ fibers onto $A$, the \textit{set of jump points} in the mode $a$ is described by $B_a= B\cap  D_a$.
\item The set of all possible successors  from a jump point $d_a\in B_a$ that fibers onto the discrete mode $a$ is given by $R(d_a)$.  The set of successors coming from mode $a$ is described by $S_a=S\cap R(B_a)$ and it fibers onto $A$ indicating all the possible discrete modes that can be reached from $A$.
\item The  \textit{set of feasible transitions} between two discrete modes $G\subseteq A\times A$ is defined by $G=\{(a,b)\in A\times A \, | \, \exists \; d_a,\; d_b\in D \mbox{ such that } (d_a,d_b)\in {\rm Graph}\, R\}$.
\end{itemize}

\begin{Ass} \label{AssImp} {\rm It is crucial for the results of the paper to assume that all the fibers of $B$ over the points $\tau_E(B_a)$ always have maximum dimension for every discrete mode $a$. In other words, once a jump point in the base manifold is achieved, the jump map is defined for the entire manifold, though in some cases the jump might imply no changes in those values. }
\end{Ass}

 It is clear that a hybrid system can be interpreted as a directed graph whose  set of nodes 
 specifies the different generalized dynamical systems and they are connected by directed edges that determine the possible transitions
  from a dynamical system to another as appears in Figure~\ref{Fig:HS}.1. Self-edges could also appear in the graph and it will imply a spontaneous change of the initial condition, as for instance happens in the following example.

\begin{example}{\bf Lagrangian hybrid systems: the bouncing ball.} \label{Ex:rebote}
{\rm
Consider a regular  Lagrangian system $L: TQ\rightarrow {\mathbb R}$. 
Suppose that we have a  function $h: Q\rightarrow {\mathbb R}$ determining unilateral constraints, i.e., the set of admissible configurations is defined by $h(q)\geq 0$. When the ball hits the floor or a surface described by $h(q)=0$ with a particular velocity, it will bounce back in a specific way.
In this case
$E=M=TQ$, $\rho=\xi_L: TQ\rightarrow TTQ$ and
\[
D=\{v_q\in TQ\; \mid \; h(q)\geq 0\}\; .
\]
 This example is a hybrid system with one node $(TQ, TQ, \xi_L, D)$ that corresponds with a generalized dynamical system and one self-edge.
The map $R: D \rightrightarrows D$ describes a jump condition (elastic, inelastic...) that only takes place in
 \[
 B=\{v_q\in TQ\; \mid\;  h(q)=0 \hbox{  and  } \langle dh(q),	v_{q}\rangle< 0\},\]
where the inequality implies that the velocity at the impact point cannot be zero, neither tangent to the surface of impact. The successors are given by
 \[
 S=\{R(v_q)\in TQ\;\mid\;  v_q\in B\}\, .
 \]
In the case of the bouncing ball always moving perpendicular to the floor, $R(v_q)=-v_q$ and the jump is active whenever the height is zero and the velocity points toward the floor. For more general impacts, the reflection laws must be considered. This example is also described in~\cite[Example 1.1]{GoebelBook} and it is essential to model collisions  between bodies, bipedal robots~\cite{AmesBipedal,MadImpulsive}.
}
\end{example}

\subsection{Hybrid trajectories}

Once the geometric hybrid systems have been introduced, let us describe a solution to the systems. The hybrid trajectories must carry the information of both the continuous and discrete states. The hybrid nature might lead to discontinuities in the trajectory.

\begin{defn}A \textit{solution to the geometric hybrid system or hybrid trajectory with initial discrete mode $a$ and initial condition $d_a\in D_a=\varphi_D^{-1}(a)$} is a piecewise absolutely continuous curve $\gamma \colon [0,T] \subseteq \mathbb{R} \rightarrow D$ such that
\begin{enumerate}
\item  $\gamma(0)=d_a$;
\item there exist $t_0,\dots , t_{N+1}$ such that $0=t_0< t_1 < t_2 < \dots <t_{N+1}=T$
and an associated  sequence $a_0,\dots , a_{N}$ with $(a_{j-1}, a_{j})\in G$ for each $1\leq j\leq N$ ;
\item for each $j=0,\dots, N$ there exist $\gamma_j\colon [t_j,t_{j+1}]\rightarrow D$ such that ${\rm Im}\, \gamma_j \subseteq D$ and $\varphi_D\circ  \gamma_j=a_j$ and \begin{equation}\label{Eq:GHSsol} \dfrac{\rm d}{{\rm d} t} \, (\tau_E \circ \gamma_j)(t)=(\rho \circ \gamma_j)(t) \quad \mbox{for almost every time } t\in [t_j,t_{j+1}];
\end{equation}
\item $\gamma_{|[t_j,t_{j+1})}=\gamma_{j}$ and $\gamma(T)=\gamma_{N}(T)$;
\item $(\gamma_j(t_{j+1}),\gamma_{j+1}(t_{j+1})) \in {\rm Graph}\, R$ for each $t=0,\dots, N-1$.
\end{enumerate}
\label{Defn:HybridTraj}
\end{defn}

\remark The hybrid trajectory $\gamma$ at each time carries the information of the discrete mode and the point in $D$ since $D$ fibers onto $A$.  The submanifold $D$ could be identified with the state manifold, positions and velocities, control bundle, etc, depending on the systems under consideration as shown in the Examples in Section~\ref{Sec:GenDynSystem}.

\remark Condition 4 indicates that the solution to the geometric hybrid system is not necessarily continuous from the left at times where the jump takes place, unless the set--valued jump map $R$ leads to that. See Example~\ref{Ex:rebote} where the point in $M$ is fixed, but the velocity changes as a consequence of the bouncing effect. In that case the trajectory is continuous on $M$, but not in $D$. However, in Example~\ref{Ex:Theoretic} the trajectory could be also discontinuous on $M$.

\remark
In the given definition discrete transition times are not admitted, that is, the trajectory cannot immediately jump after getting to a new discrete mode, but it will instead have to
temporarily evolve according to the continuous dynamics associated with that new discrete mode. These discrete transition times are discussed in~\cite{VanDerSchaft}, but when considering the hybrid trajectory in $M_a$ only the initial time and the final time in the sequence of the discrete transition times are considered to take the states. That is why we have decided not to admit the discrete transition times in this work. 

In the literature of hybrid systems the trajectories often are defined algorithmically~\cite{Liberzon,VanDerSchaft}. In the geometric framework we have introduced here and having in mind that the trajectory changes to a different discrete mode every time it hits $B$, the algorithm to obtain trajectories includes the following steps as illustrated in Figure~\ref{Fig:HSJumps}:
\begin{enumerate}
\item \textbf{Starting point:} Let $d\in D$ such that $a=\varphi_D(d) \in A$.
\item  The trajectory $\gamma$ evolves in $D_a$ as long as it is not in $B_a$.
\item If $\gamma(t)\in B_a$, choose $\tilde{d}\in D$, if exists, such that $(\gamma(t),\tilde{d})\in {\rm Graph}\, R$.
\begin{enumerate}
\item If $\tilde{d}$ is not a jump point,
that is, it is not in $B_{\varphi_D(\tilde{d})}$, return to 2 by taking $d:=\tilde{d}$, $a:=\varphi_D(\tilde{d})$.
\item If $\tilde{d}$ is in $B$, return to 3.
 \end{enumerate}
\item If $\gamma(t)\in B_a$ and for any $\tilde{d}$ such that $(\gamma(t),\tilde{d})\in {\rm Graph}\, R$ and $\tilde{d}\notin D_{\varphi_D({\tilde{d}})}$, the trajectory finishes at time $t$.
\end{enumerate}

\begin{figure}[h] 
	\begin{center}
		\resizebox{16cm}{!}{
			
			\fbox{\begin{tikzpicture}
				\draw[fill=Mycolor0] plot [smooth cycle] coordinates {(5,0.25) (6,0.35) (6.5, 0.2) (7,.5) (7,1.65)  (7,2.75) (4.8,2.8) } node at (4.2,1.7) {$D_{a_{j+1}}$};
				
				\draw[fill=Mycolor0] plot [smooth cycle] coordinates {(0.8,.1)(1.5,.2)(2.8,.5)(2.9,1.5)(2.8,2.8)(1.4,2.5)(0.5,0.5)} node at (0.6,1.8) { $D_{a_j}$};
				\draw[fill=Mycolor1] plot [smooth cycle] coordinates {(0.9,.4)(1.5,.4)(1.5,1)(0.9,1)} node at (0.65,0.4) {\small$\scriptscriptstyle B_j$};
				\draw[fill=Mycolor2] plot [smooth cycle] coordinates {(2.8,1.7)(2,1.7)(2,2.2)(2.78,2.2)} node at (2.2,2.4) {\small$ \scriptscriptstyle S_{j-1}\cap D_{a_j}$};
				\draw[fill=Mycolor2]  plot [smooth cycle] coordinates {(5.7,.4)(6.4,.45)(6.4,1)(6,1)} node at (5.6,1.2) {$\scriptscriptstyle S_{j}\cap D_{a_{j+1}}$};
				\draw[fill=Mycolor1]  plot [smooth cycle] coordinates {(4.9,2)(4.9,2.7)(6.2,2.8)(6.4,2.4)} node at (6.77,2.3) {$\scriptscriptstyle B_{j+1}$};
				\node[label=below: $\scriptscriptstyle\gamma(t_{j-1})$]  at (2.42,2.4) {};
				\draw(2.7,1.8) node[circle, inner sep=0.8pt, fill=red]  {};
				\draw(6.1,0.7) node[circle, inner sep=0.8pt, fill=red]  {};
				\node[label=below:$\scriptscriptstyle\gamma_j(t_j)$]  at  (1.2,0.92) {};
				\node[label=below:$\scriptscriptstyle\gamma(t_j)$]  at (6.1,0.95) {};
				\node[label=below:$\scriptscriptstyle\gamma_{j+1}(t_{j+1})$]  at (5.55,2.8) {};
				\draw[->] [red] plot [smooth, tension=1] coordinates {  (2.7,1.78) (2.3,1) (1.5,1)  (1.2,0.7)};
				\draw[->] [red] plot [smooth, tension=1.5] coordinates {  (6.1,0.7) (6.4,1.5) (6.25,2.5)};
				
				\path[->] (1.65,0.6) edge[bend right] node[above]{$\scriptscriptstyle R_{a_{j-1}a_{j}}$} (5.6,0.5);
				\path[->] (0.4,3) edge[bend left] node[above]{$\scriptscriptstyle R_{a_{j}a_{j+1}}$} (2,2.6);
				\path[->] (6,2.85) edge[bend left] node[above]{$\scriptscriptstyle R_{a_{j+1}a_{j+2}}$} (7,3.25);
				\end{tikzpicture} }}
		\caption{Representation of a hybrid trajectry when evolving from mode $a_j$ to $a_{j+1}$.}
		\label{Fig:HSJumps}
		
	\end{center}
\end{figure}
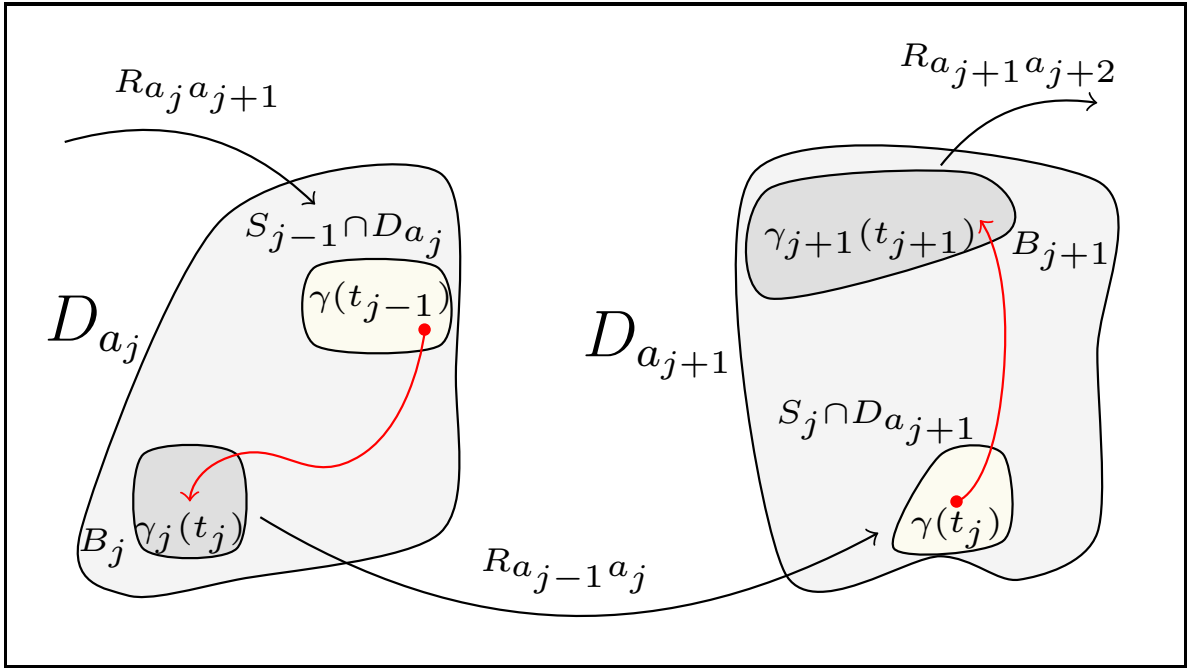

Note that if the trajectory never reaches $B$, it will evolve depending on the imposed conditions such as fixed final time, end-point conditions, the state constraints, etc, as a simple generalized dynamical system. After choosing $\tilde{d}$ such that $(\gamma(t),\tilde{d})\in  {\rm Graph}\, R$, the trajectory will evolve in $D_{\varphi_D(\tilde{d})}$ if $\tilde{d}$ is not in $B_{\varphi_D(\tilde{d})}$. As shows the step 4, it could happen that
when solving the system~\eqref{adf_1} for the initial point $\tilde{d}$, the inequality constraints are not satisfied. Then a dead end has been reached and the trajectory cannot continue. For instance,
\begin{equation}\label{eqDeadEnd}
\left\{ \begin{array}{l} \dot{x}=-1\, , \\
x\geq 0\, , \\
x(t_0)=0\, .
\end{array}\right.
\end{equation}

\example  Let us consider the following example from~\cite[Example 4.1]{2001CLMM}
that consists of a particle in the plane subjected to constraints on a half-space and moving freely in the other half. This example can be interpreted as a
geometric hybrid system by considering $(A= \{a,b\},E=A\times T\mathbb{R}^2, M=A\times\mathbb{R}^2,\rho={\rm id},D,R) $ where
\begin{equation*}
D=\{(a,(x,y,v_x,v_y))\in E\, | \, x\leq 0\}\, \bigcup   \, \{(b,(x,y,v_x,v_y))\in E\, |\, x\geq 0, \; v_x=v_y\},
\end{equation*}
and the jump map
is given by
\begin{eqnarray*}
R\colon B &\rightrightarrows & D\\
(a,(0,y,v_x,v_y)) & \longmapsto & \left\{ \begin{array}{lcl} \left(b, \left(0,y, \dfrac{v_x+v_y}{2},  \dfrac{v_x+v_y}{2}\right)\right)  &  \mbox{   if   } & v_x+v_y\geq  0,\\
\left(b, \left(0,y, v_x,v_y\right)\right)  &  \mbox{   if   } & v_x+v_y<0,  \end{array}\right. \\
(b,(0,y,v_x,v_y)) & \longmapsto &  (a, (0,y,v_x,v_y))\, .
\end{eqnarray*}
Thus,
\begin{equation*}
B= \{(a,(0,y,v_x,v_y))\in E \}\, \bigcup \, \{(b,(0,y,v_x,v_y))\in E\}\,
\end{equation*}
satisfies Assumption~\ref{AssImp}, though there are velocities that might not lead to any trajectory and become dead ends because of the dynamics in the two discrete modes as happens in~\eqref{eqDeadEnd}.

The map $R_{ab}\colon B_a \rightrightarrows D_b$ is defined by $R_{ab}(d)=R(d_a)$ such that $\varphi_D(R_{ab}(d))=b$ for $d_a$ in $B_a$. Analogously, $R_{ba}\colon B_b\rightrightarrows D_a$ takes the jump points in the discrete mode $b$ to the successors in the discrete mode $a$. This hybrid system includes two different dynamics: the discrete mode $a$ corresponds to the free dynamics and the discrete mode $b$ has a nonholonomic constraint given by the codistribution spanned by ${\rm d}x-{\rm d}y$.

Let $(a,(x_0,y_0,v_{x_0},v_{y_0})$ be the initial condition in $D$ such that $x_0<0$ and $v_{x_0}>0$. The motion is first free and  there are many different curves  $(x(t),y(t),v_x(t),v_y(t))$ satisfying the initial conditions.
If the trajectory hits $B_a$ at a time $t^*$, that is, $x(t^*)=0$, the jump map acts and sends the trajectory to the discrete mode $b$
with initial condition $\left(0,y(t^*),\dfrac{v_x(t^* )+v_y(t^* )}{2},  \dfrac{v_x(t^* )+v_y(t^* )}{2}\right)$ if $v_x(t^*)+v_y(t^*)\geq 0$. In other words,
\begin{equation*}
R(a,(0,y(t^*),v_x(t^*),v_y(t^*)))=\left(b,\left(0,y(t^*),\dfrac{v_x(t^* )+v_y(t^* )}{2},  \dfrac{v_x(t^* )+v_y(t^* )}{2}\right)\right)\, .
\end{equation*}

If we start in the discrete mode $b$ with initial condition $(x_0,y_0,v_{x_0},v_{x_0})$ such that $x_0>0$ and $v_{x_0}<0$, the solution curve is
\begin{equation*}
x(t)=x_0+f(t), \quad y(t)=y_0+f(t)\, \,
\end{equation*}
where $f$ satisfies $f(0)=0$ and $\left.\dfrac{\rm d}{{\rm d} t} \right|_{t=0}f(t)=v_{x_0}$.
As soon as the trajectory hits $B_b$ at a time $t^*$, that is, $x(t^*)=0$, 
the jump map acts and sends the trajectory to $(a,(0,y(t^*),v_x(t^*),v_x(t^*)))$. In other words,
\begin{equation*}
R\left(b,(0,y(t^*),v_x(t^*),v_x(t^*))\right)=(a,(0,y(t^*),v_x(t^*),v_x(t^*)))\, .
\end{equation*}

Some curves might never change the discrete mode and the trajectory would only evolve according to one generalized dynamical system, for instance, if the initial discrete mode is $b$, $(x\circ \gamma)(t)>0$ and $\dfrac{\rm d}{{\rm d} t} (x\circ \gamma)(t)>0$ for all $t$ in the domain of  $\gamma$.

\section{Global controllability of geometric hybrid control systems}\label{Sec:ControlTestHCS}

An interesting and particular case of the geometric hybrid systems consists of including controls both in the continuous and the discrete modes.
An example  of such hybrid control systems is the motion of an automobile with automatic or manual transmission. More examples
can be found in~\cite{Liberzon,Lygeros,VanDerSchaft}.
Here we restrict our attention to have controls only in the continuous dynamics.  The control set $U$ fibers onto $A$, so it could be different depending on the discrete mode we are on. Using the geometric framework in Definition~\ref{Def:GHS}, a geometric hybrid control system is just a particular case of a geometric hybrid system as described in Definition~\ref{Def:GHCS}. To emphasize the role of the controls  the global space $E$ corresponds with $F\times U$ and fibers onto $A$.

When working with control systems, controllability is one of the properties of great interest because it guarantees that between any two points on the manifold there exists a trajectory solution of the system that joins them. Controllability is a hard problem when studying classical control systems~\cite{99Agrachev}, and so is when studying hybrid control systems. However, some results about classical local controllability have already been extended to hybrid control
systems under some fairly strong assumptions~\cite{2002BulloZefran,2007YangBlanke}. In this section we provide new contributions to the geometric description of controllability for geometric hybrid control systems. At first, we obtain some global controllability tests and finish the novel results with an infinitesimal characterization using the geometry of the set of jump points and the leaves of each control system involved in the hybrid control one.

\begin{defn}\label{Def:GHCS}
A \textit{geometric hybrid control system (HCS)} is a geometric hybrid system associated with the six-tuple $(A, F\times U, M, \rho, D, R)$.
\end{defn}
If $F=M$, then $\rho=X\colon F\times U \rightarrow {TM}$ is a vector field on $M$ depending on the controls and  every generalized dynamical system is a nonlinear control system as described in Example~\ref{Ex:control}. However, Definition~\ref{Def:GHCS} also includes implicit control systems and control systems defined on Lie algebroids~\cite{Eduardo}, etc.

We will refer to a hybrid trajectory of the hybrid system in Definition~\ref{Defn:HybridTraj} as a \textit{hybrid control trajectory} to emphasize it is given by  a curve $\gamma=(\sigma,u)$ on $D\subset F\times U$.

A HCS  is \textit{discrete controllable}
if for any two discrete modes $a_1$ and $a_2$ in $A$ there exists a sequence of feasible transitions or edges in the directed graph $G$ associated with the system that goes from $a_1$ to $a_2$.
A HCS  is \textit{controllable} if for any two points $x$ and $y$ in the base manifold $M$ there exists a hybrid control trajectory that joins them. We do not consider controllability on the entire $F$. For the mechanical systems such as the ones in Example~\ref{el-eq}, once controls are added to the picture,  controllability on $F=TQ$ implies to build trajectories both on the states and the velocities as considered in~\cite{2005BulloLewisBook}.

In a local sense the controllability of a system is related with the topology of the \textit{reachable set from $x$ in $M$ at time $T$} defined as follows:
\begin{equation}
{\mathcal R}(x_0,T)=\left\{ x\in M \; \left| \, \begin{array}{l} \mbox{there exist $d_0$ in $D$ such that $(\tau_M\circ \rho)(d_0)=x_0$,}
\\  \mbox{a hybrid control trajectory $\gamma$ with initial discrete mode $\varphi_M(x_0)$} \\
\mbox{and initial condition
$d_0$ such that } (\tau_M\circ \rho \circ \gamma)(T)=x \, .  \end{array} \right. \right\}
\label{eqReachSet} \end{equation}
Note that the initial condition $x_0$ in $M$ already carries the information of the initial discrete
mode given by $\varphi_M(x_0)$ because the manifold $M$ fibers onto $A$. That restricts the trajectories since we cannot drift away from $x_0$ by using a discrete mode different from
the prefixed one, unless $\varphi_M$ is understood as a set--valued map. In such a way, for each $x_0$ in $M$, $\varphi_M(x_0)$ gives us all the feasible
discrete modes at $x_0$. From now on we consider for simplicity that $M$ is a trivial fiber bundle given by
$A\times M$ (with abuse of notation for simplicity) so that the discrete and the continuous states can be taken separately and we focus only on the reachable points on $M$.

To characterize accessibility properties for hybrid control systems, it is necessary to define the \textit{reachable set from
$x_0$ up to time $T$}:
\begin{equation*}
{\mathcal R}(x_0,\leq T)=\bigcup_{t\in (0,T]} {\mathcal R}(x_0,t)\, .
\end{equation*}

To define the following notions we consider the relative topology associated with the manifold where the continuous
dynamics takes place. A hybrid control system (HCS) is \textit{locally accessible from $x_0$ in $M$} if there exists a time $T>0$ such that
${\mathcal R}(x_0,\leq T)$ has a nonempty interior. A HCS is \textit{accessible} if it is accessible from every point $x_0$ in $M$.
 A HCS is
\textit{locally controllable (LC) from $x_0$ in $M$} if there exists a time $T>0$ such that $x_0$ lies in the interior of
${\mathcal R}(x_0,\leq T)$.
In all these definitions the fact that a point is in $M$ does not impose restrictions on the discrete mode that it belongs to.

As mentioned earlier if the hybrid trajectories are discontinuous in $M$, then the reachable set might include several disconnected subsets that could even be in different manifolds. Some of the subsets could have empty interior, but as long as there is a time so that one of them does not have empty interior, the system will be, for instance, locally accesible. In those cases the notion of neighborhood of a jump point or a successor should be conveniently adapted by using the corresponding relative topology. Some infinitesimal results for accessibility and controllability are given at the end of this Section.

To have a better idea about how the notions of accessibility and controllability differ from the intuition gained by studying classical control systems, let us consider the following two examples.

\begin{example}\label{Ex:Theoretic}
	\begin{figure}[h]\centering 
\includegraphics[scale=0.4]{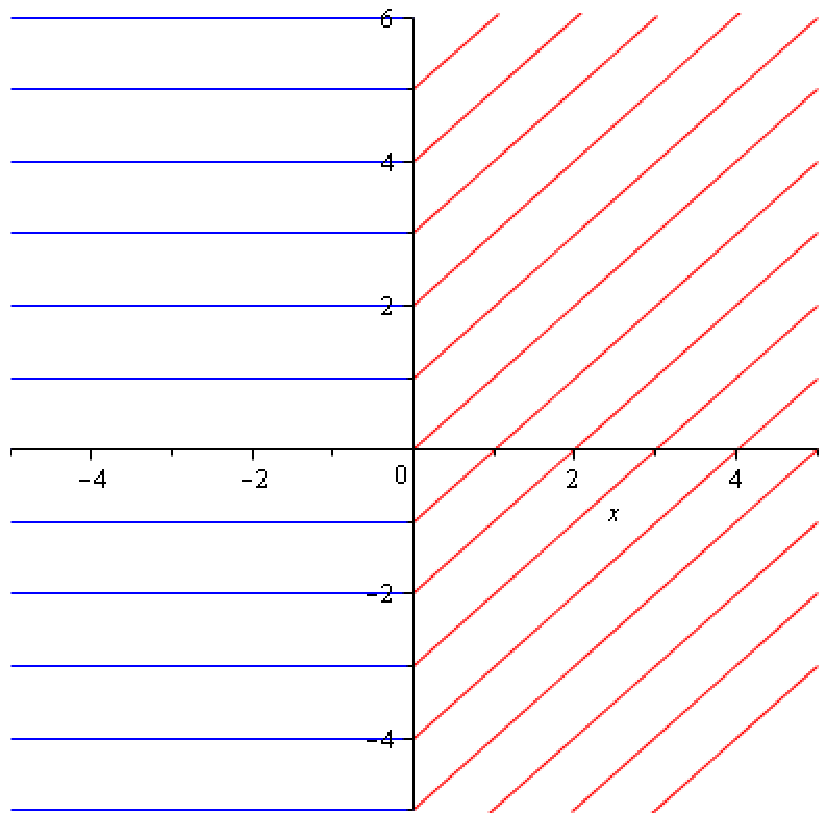} %
\hspace{5mm} \includegraphics[scale=0.4]{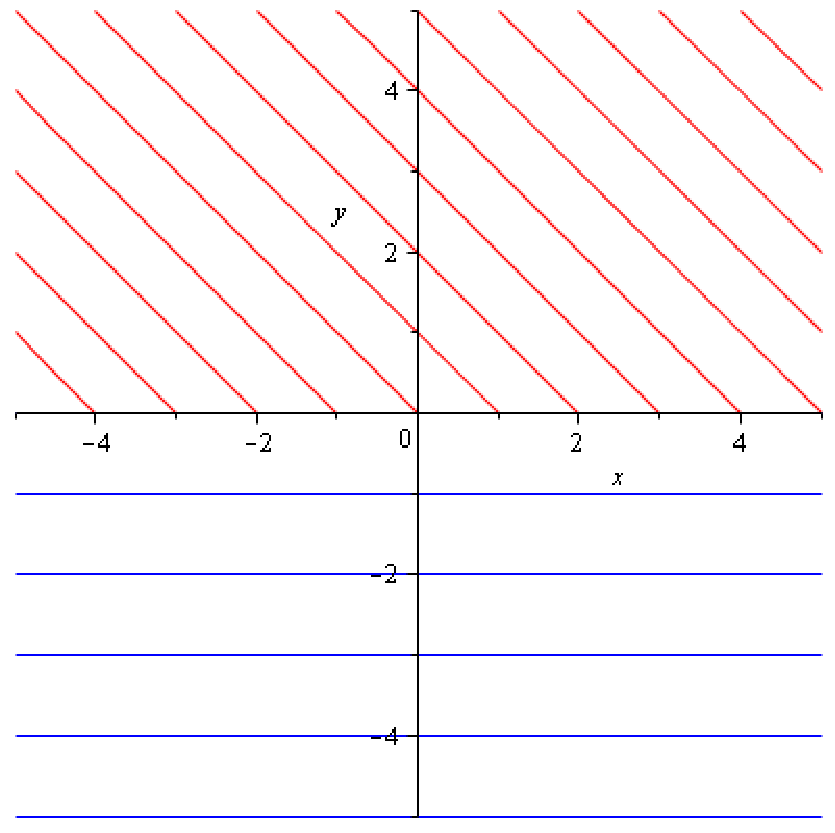}
\caption{Two examples of hybrid control trajectories.}\label{Fig:Example}
	\end{figure}

The hybrid control systems in Figure~\ref{Fig:Example} have both two discrete modes. The solutions of the systems at each discrete mode correspond to the lines in the plot that can be traced in both senses. In both cases, all the control systems are linear in the controls and the control set contains the zero in the interior. In the example on the left hand-side, the jump takes place in the $OY$--axis from one discrete mode to another and vice versa. If the jump map is not a set--valued map the systems will not be locally controllable, neither locally accessible because the reachable set will always have empty interior. It is clear from the picture that the reachable set has an empty interior. One way to gain controllability consists of defining a set--valued jump map so that every time the trajectory hits the jump set it can change to any point in the axis, that is, $R_{ab}(0,y,u)=\{(0,\widetilde{y},\widetilde{u}) \, |\,\widetilde{y}\in \mathbb{R}, \; \widetilde{u}\in U\}$. These points in the graph of $R$ are chosen conveniently so that the given initial and final points can be joined by a solution to the  hybrid control system. As a result, the system is accessible and global controllable. Note that the trajectory could be discontinuous on $\mathbb{R}^2$. The reachable set from any given point is not disconnected, but reachable sets for small time could have empty interior. Thus small-time local controllability will not be satisfied here, see~\cite{2005BulloLewisBook} for more details on that notion. 

The example on the right hand-side is not controllable, neither accessible, because the trajectories in one of the discrete modes never intersect the jump set given by the $OX$--axis. On the other discrete mode, there is no continuation after arriving at the jump set. Thus, those two modes do not complement each other well and the hybrid system becomes neither accessible nor controllable.
\end{example}

After providing the first intuition on how the jump map can be key to gain controllability for hybrid control systems, let us state some specific results. Now it is necessary to distinguish different cases to be able to provide necessary and/or sufficient conditions for controllability of HCS.
As mentioned earlier, a hybrid system can be represented by a graph with as many vertices as discrete modes and whose edges are the feasible transitions between
two discrete modes. This graph, denoted by $G$, is directed because the edges have a direction associated to it. Remember that a directed graph is
\textit{strongly connected} if it contains a directed path from $a_1$ to $a_2$ 
for every pair of vertices $a_1$ and $a_2$. When a directed edge exists in the graph, $(a_1,a_2)\in G$, it will imply that the trajectories can make the transition from mode $a_1$ to mode $a_2$.

\begin{prop}\label{Prop:DiscreteControl}
 A $\Sigma$ is a discrete controllable HCS  if and only if the graph is strongly connected.
\end{prop}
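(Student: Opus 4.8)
The plan is to unpack both sides of the claimed equivalence purely in terms of the definitions already in place: the notion of \emph{discrete controllable} HCS given just before Proposition~\ref{Prop:DiscreteControl}, and the notion of \emph{strongly connected} directed graph recalled in the same paragraph. Recall that $\Sigma$ is discrete controllable if and only if for any two discrete modes $a_1,a_2\in A$ there is a sequence of feasible transitions (edges of $G$) leading from $a_1$ to $a_2$; and $G$ is strongly connected if and only if for every ordered pair of vertices $(a_1,a_2)$ there is a directed path from $a_1$ to $a_2$. So the heart of the matter is simply to observe that ``a sequence of feasible transitions in $G$ from $a_1$ to $a_2$'' and ``a directed path in $G$ from $a_1$ to $a_2$'' are two names for the same object, namely a finite sequence of vertices $a_1=b_0,b_1,\dots,b_k=a_2$ with $(b_{j-1},b_j)\in G$ for each $1\le j\le k$.

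First I would fix $a_1,a_2\in A$ and show the two characterizations match for this pair. For the forward implication, assume $\Sigma$ is discrete controllable; then by definition there is a sequence of feasible transitions from $a_1$ to $a_2$, which is exactly a directed walk in $G$ from $a_1$ to $a_2$, and one may extract a directed path by deleting repeated vertices (a standard graph-theoretic reduction). Since $a_1,a_2$ were arbitrary, $G$ is strongly connected. Conversely, if $G$ is strongly connected, then for arbitrary $a_1,a_2$ there is a directed path from $a_1$ to $a_2$, and such a path is in particular a sequence of feasible transitions; hence $\Sigma$ is discrete controllable. One subtlety worth addressing explicitly is the degenerate case $a_1=a_2$: both definitions are trivially satisfied by the empty sequence/path, so no loop edge is required, and this matches the usual convention that a vertex is reachable from itself.

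The only remaining point is to make sure the definition of $G$ as the \textit{set of feasible transitions} $G=\{(a,b)\in A\times A \mid \exists\, d_a, d_b\in D \text{ such that } (d_a,d_b)\in {\rm Graph}\, R\}$ is exactly the edge set of the directed graph invoked in the statement ``the graph is strongly connected''; this is just an identification of notation, already made in the text preceding the proposition. I do not expect any genuine obstacle here: the proposition is essentially a restatement of a definition in graph-theoretic language, and the proof is a two-line unfolding together with the elementary observation that walks can be shortened to paths. If anything, the one thing to be careful about is not to over-formalize — the substance is the dictionary ``feasible-transition sequence $=$ directed path'', and the proof should say precisely that and no more.
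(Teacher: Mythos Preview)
Your proposal is correct and matches the paper's own treatment: the paper does not give a written-out proof but simply states that the result is straightforward from the definitions of discrete controllability and strongly connected directed graph, which is exactly the unfolding you carry out. Your extra care about walks versus paths and the degenerate case $a_1=a_2$ is fine but not strictly needed, since the paper's notion of ``sequence of feasible transitions'' already coincides verbatim with ``directed path'' as defined there.
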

The proof is straightforward using the definitions of discrete controllability and strongly connected directed graph. For instance, the system in the right-hand side of Figure~\ref{Fig:Example} is not discrete controllable because there is no way to go from one system to another by using admissible trajectories. Hence, that edge does not exist in the graph.

\begin{prop} Let $\Sigma$ be a discrete controllable HCS. If at each discrete mode $\{a\}$ the generalized dynamical system is controllable, then the geometric hybrid control system (HCS) is global controllable.
\begin{proof}
 The assumption of discrete controllability guarantees that for any two discrete modes there exists a sequence from one to the other and vice versa. If at each discrete mode $a$ the corresponding continuous control system
is controllable, then any two points in $M_a$ can be joined by hybrid control trajectories, in particular, when one of them is in the set $\tau_E(B_a)$ of jump points. Thus any two points at different discrete modes are also joined by hybrid control trajectories because we only have
to identify the path of discrete modes that takes us from the initial discrete mode to the final one. As every generalized dynamical system $\Sigma_a$ is controllable, there exists a trajectory that will take us from the successor points to the next jump point.
\end{proof} \label{Prop:ControlConnected}
\end{prop}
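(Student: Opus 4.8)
The plan is to construct a hybrid control trajectory joining two arbitrary points $x,y\in M$ by concatenating trajectories of the continuous control systems, one per discrete mode visited along a path in the graph $G$, and gluing consecutive pieces with the jump map $R$. First I would put $a_1=\varphi_M(x)$ and $a_2=\varphi_M(y)$. If $a_1=a_2$ there is nothing to prove: controllability of the generalized dynamical system $\Sigma_{a_1}$ yields a solution curve in $D_{a_1}$ from $x$ to $y$, which is already a (jump-free) hybrid control trajectory in the sense of Definition~\ref{Defn:HybridTraj}. If $a_1\neq a_2$, Proposition~\ref{Prop:DiscreteControl} says the directed graph $G$ is strongly connected, so one may fix a directed path $a_1=c_0, c_1,\dots, c_m=a_2$ in $G$.

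Next, for each edge $(c_j,c_{j+1})\in G$, the definition of the set $G$ of feasible transitions provides a jump point $b_j\in B_{c_j}$ together with a successor $s_{j+1}\in R(b_j)$ with $\varphi_D(s_{j+1})=c_{j+1}$, i.e. $s_{j+1}\in D_{c_{j+1}}$. I would then build the trajectory segment by segment. On the initial segment, controllability of $\Sigma_{c_0}$ gives a solution curve in $D_{c_0}$ whose base projection runs from $x$ to $\tau_E(b_0)$; by Assumption~\ref{AssImp} the point reached lies in $B_{c_0}$ regardless of the control value it carries, so $R$ is active there and one follows $(b_0,s_1)\in{\rm Graph}\, R$. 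On the $j$-th intermediate segment ($1\le j\le m-1$), controllability of $\Sigma_{c_j}$ gives a solution in $D_{c_j}$ from $\tau_E(s_j)$ to $\tau_E(b_j)\in\tau_E(B_{c_j})$, again a jump point by Assumption~\ref{AssImp}, and one follows $(b_j,s_{j+1})\in{\rm Graph}\, R$. On the final segment, controllability of $\Sigma_{c_m}$ gives a solution in $D_{c_m}$ from $\tau_E(s_m)$ to $y$. Concatenating these $m+1$ pieces, choosing times $0=t_0<t_1<\dots<t_{m+1}=T$ compatible with the concatenation and taking the mode sequence $c_0,\dots,c_m$, one verifies conditions (1)--(5) of Definition~\ref{Defn:HybridTraj} directly, so the resulting curve is a hybrid control trajectory joining $x$ and $y$; as $x,y$ were arbitrary, the HCS is global controllable.

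The step needing care — and the place where Assumption~\ref{AssImp} is genuinely used — is the gluing: controllability of the continuous systems only delivers trajectories between prescribed points of the base manifolds $M_a$, whereas the jump set $B$ lives in $D\subseteq F\times U$ and may be genuinely constrained in the fibre directions (e.g. in Example~\ref{Ex:rebote} membership in $B$ forces the velocity to point into the impact surface), so a priori steering the base curve to $\tau_E(b_j)$ need not place us at a point where $R$ is defined. Assumption~\ref{AssImp}, which asserts that the fibre of $B$ over each base jump point is of maximal dimension, removes exactly this obstruction: reaching $\tau_E(b_j)$ in $M_{c_j}$ already puts the trajectory in $B_{c_j}$. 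I would also remark explicitly that the concatenated curve is only piecewise absolutely continuous and is in general discontinuous at the jump times (in $D$, and possibly even in $M$), which is precisely what Definition~\ref{Defn:HybridTraj} allows.
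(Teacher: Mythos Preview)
Your proposal is correct and follows essentially the same strategy as the paper's proof: use discrete controllability to fix a path of modes, then exploit controllability of each $\Sigma_a$ to steer from successors to the next jump point and concatenate. Your version is simply more explicit than the paper's terse argument; in particular, your careful discussion of why Assumption~\ref{AssImp} is needed for the gluing (so that reaching $\tau_E(b_j)$ in $M_{c_j}$ genuinely lands the trajectory in $B_{c_j}$) fills a gap the paper's proof leaves implicit.
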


\begin{remark} The inverse statement of Proposition~\ref{Prop:ControlConnected} is not necessarily true. As shown in Example~\ref{Ex:Theoretic}, there are controllable HCS whose generalized dynamical systems are not controllable by themselves. However, the discrete transitions and jump maps added to them by the hybrid nature make them controllable.
\end{remark}

Now, we state an algorithmic result to check the controllability of HCS so that any two points can be joined by hybrid
trajectories. From now  on
we focus on nonlinear
hybrid control systems where $F=M$ and the jump set only impose restrictions in the states and the controls because $B\subseteq M\times U$. At each discrete mode $a$ there is a control system as the one in Example~\ref{Ex:control} with $\rho(a,x_a,u_a)= X_a(x_a,u_a)=X_a^{u_a}(x_a)$, where $X_a^{u_a}$ is a vector field on $M_a$, that is, $X_a^{u_a}\in \mathfrak{X}(M_a)$ for every $u_a\in U_a$.
Let
$x_0\in M_a$,
all solutions to HCS from $x_0$ in the discrete mode $a$ are given by concatenations of flows 
$\phi^{X_a^{u_s}}_{t_s}\colon M_a\rightarrow M_a$,
of vector fields 
$X_a^{u_s}$
 on $M_a$ as follows
\begin{equation}\label{eq:Lx0}
 L_{x_0}=\left\{ (\phi^{X_{a}^{u_l}}_{t_l}\circ \ldots \circ \phi^{X_{a}^{u_1}}_{t_1})(x_0) \; | \; t_s\in {\mathbb R}^+, \; u_s\in U_a, \; X_a^{u_s}\in  \mathfrak{X}(M_a)
 \; \hbox{ for }
 1\leq s\leq l\right\}\subseteq M_a\, ,
 \end{equation}
 Given the initial point $x_0$ the flow $\phi^{X_a^{u_s}}_{t_s}$ travels from $x_0$ according to the dynamics $X_a^{u_s}$ for an interval of time of length $t_s$ and constant control. If the controls are not constant, but dependent on time the flow of time-dependent vector fields (see~\cite{1988AbMaRa} for instance) must be used in the above equation. 

Remember that $M_a$ could be a manifold with corners, etc.
This can be defined for each discrete mode of the HCS. Once the jump has taken place, concatenations of trajectories in the new discrete mode can be considered as appears in Equation~\eqref{Eq:CondTheoremnStates}.

\begin{thm} \label{thm:nStatesLinear}
Let $A=\{1,\dots, n\}$ and $B_a$ be the set of points of $D_a\subseteq M_a\times U_a$
 where the jumps from the discrete mode $a$ to another take place.
Assume that the HCS is discrete controllable. The system is global controllable if and only if for every pair of discrete modes $a,\; b$ in $A$ and for all points $x_a$ in $M_a$ and $x_b$ in $M_b$ there exists a sequence of
discrete modes $(a_0=a,a_1,\dots, a_{k-1},a_k=b)\in A^{k+1}$ such that $(a_l,a_{l+1})\in G$ for all $l=0,\dots,k-1$, and a sequence of jumping points $(y_{a_1},\dots, y_{a_{k}})\in B_{a_0}\times \dots  \times B_{a_{k-1}}$ such that $x_a\in \tau_E(y_a)$, $x_b\in \tau_E(y_{a_{k+1}})=\tau_E(y_b)$:
\begin{equation} \tau_E(y_{a_1})\in L_{\tau_E(y_{a})},\quad
\tau_E(y_{a_{l+1}})\in L_{\tau_E(R_{a_l \, a_{l+1}}(y_{a_l}))}=\bigcup_{y\in R_{a_l \, a_{l+1}}(y_{a_l})}L_{\tau_E(y)} \,, \label{Eq:CondTheoremnStates}
 \end{equation}
for all $l=1,\dots, k$.

\begin{proof}
The statement is an algorithmic description of the necessary and sufficient conditions so that an admissible trajectory exists between any two given points. The assumptions in the theorem and equation~\eqref{Eq:CondTheoremnStates} guarantee that at each discrete mode the next jumping point
can be reached starting from the successors of the previous jumping point. Thus it is possible to algorithmically construct a solution to HCS joining any two given points
under the assumptions in the result.
\end{proof}
\end{thm}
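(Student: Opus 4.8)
The plan is to unwind the definitions on both sides of the equivalence and verify that the displayed condition~\eqref{Eq:CondTheoremnStates} is nothing more than a precise bookkeeping of what it means for a hybrid control trajectory to exist between the two chosen points. I would argue the two implications separately, using the sets $L_{x_0}$ from~\eqref{eq:Lx0} as the basic building blocks, since by construction $L_{x_0}$ is exactly the set of endpoints reachable from $x_0$ while staying inside a single discrete mode $a$ (a concatenation of flows of the vector fields $X_a^{u}$).

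\textbf{Sufficiency.} Assume that for the given $x_a\in M_a$ and $x_b\in M_b$ there is a sequence of discrete modes $(a_0=a,\dots,a_k=b)$ with $(a_l,a_{l+1})\in G$ and jump points $(y_{a_1},\dots,y_{a_k})$ satisfying~\eqref{Eq:CondTheoremnStates}. First I would use discrete controllability only to make sense of the indices (although here the mode sequence is already supplied). The condition $\tau_E(y_{a_1})\in L_{\tau_E(y_a)}$ provides, by definition of $L_{\tau_E(y_a)}$, a finite concatenation of flows of vector fields in $\mathfrak X(M_{a_0})$ joining $\tau_E(y_a)=x_a$ to the base point of the first jump point; these flows, together with the corresponding constant-control choices, assemble into a piece $\gamma_0$ of a hybrid control trajectory in the sense of Definition~\ref{Defn:HybridTraj}, condition~3. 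At the jump point $y_{a_1}\in B_{a_0}$ the jump map $R_{a_0 a_1}$ is active, and $(y_{a_1},\tilde y)\in{\rm Graph}\,R$ for some $\tilde y\in R_{a_0 a_1}(y_{a_1})$; the containment $\tau_E(y_{a_2})\in L_{\tau_E(R_{a_1 a_2}(y_{a_1}))}=\bigcup_{y\in R_{a_1 a_2}(y_{a_1})}L_{\tau_E(y)}$ guarantees we may pick this $\tilde y$ so that the next base point is reachable inside $M_{a_1}$. Iterating this for $l=1,\dots,k$ and invoking Assumption~\ref{AssImp} (so the fibre of $B$ over each base jump point is full-dimensional, ensuring the jump is genuinely available once the base point is hit) produces a finite sequence $\gamma_0,\dots,\gamma_k$ of continuous arcs together with jump transitions, i.e.\ exactly a hybrid control trajectory from $x_a$ to $x_b$. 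Hence the system is global controllable.

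\textbf{Necessity.} Conversely, suppose the HCS is global controllable and fix $x_a\in M_a$, $x_b\in M_b$. By definition there is a hybrid control trajectory $\gamma$ with $(\tau_M\circ\rho\circ\gamma)(0)=x_a$ and $(\tau_M\circ\rho\circ\gamma)(T)=x_b$; by Definition~\ref{Defn:HybridTraj} it decomposes into pieces $\gamma_0,\dots,\gamma_N$ lying in modes $a_0=\varphi_M(x_a),\dots,a_N=\varphi_M(x_b)$ with $(a_{j-1},a_j)\in G$, and with jump transitions $(\gamma_j(t_{j+1}),\gamma_{j+1}(t_{j+1}))\in{\rm Graph}\,R$. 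Each continuous piece $\gamma_j$ is an absolutely continuous solution of $\dot{(\tau_E\circ\gamma_j)}=\rho\circ\gamma_j$ in $M_{a_j}$; I would then invoke the standard fact (essentially the one used to define $L_{x_0}$, and implicit in the Orbit-Theorem discussion referenced in the paper) that any such solution can be approximated/realized as — or, for the purposes of endpoint reachability, its endpoint lies in — a concatenation of flows of the vector fields $X_{a_j}^{u}$, so that the endpoint of $\gamma_j$ belongs to $L_{(\text{start of }\gamma_j)}$. Reading the chain of pieces and jumps from left to right and setting $y_{a_{l}}$ to be the hybrid state at the $l$-th jump (which lies in $B_{a_{l-1}}$ since a jump occurs there), the chain of memberships is exactly~\eqref{Eq:CondTheoremnStates}, with $x_a\in\tau_E(y_a)$ and $x_b\in\tau_E(y_b)$ holding trivially.

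\textbf{Main obstacle.} The only nontrivial point is the identification, in the necessity direction, of the endpoint of a general absolutely continuous solution of the continuous dynamics with a point of $L_{x_0}$, i.e.\ with the endpoint of a \emph{finite} concatenation of flows of the individual vector fields $X_a^{u}$. For piecewise-constant controls this is immediate; in general one must either restrict a priori to piecewise-constant controls in the definition of hybrid control trajectory, or appeal to a density/approximation argument (bang-bang-type, or the Orbit Theorem of~\cite{AndrewNotes,S73}) to justify that the reachable endpoints are unchanged. I would flag this as the point requiring care and handle it by noting that~\eqref{eq:Lx0} was set up precisely so that $L_{x_0}$ is the set of endpoints of concatenated flows, and that the continuous-dynamics reachable set from $x_0$ coincides with (the closure of) $L_{x_0}$; the rest of the argument is then pure bookkeeping over the mode sequence and the graph $G$, using Proposition~\ref{Prop:DiscreteControl} to know such sequences exist and Assumption~\ref{AssImp} to know the jumps are available.
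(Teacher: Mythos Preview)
Your proposal is correct and follows essentially the same approach as the paper: both treat the theorem as a direct unpacking of the definition of a hybrid control trajectory, reading the mode sequence and jump points off a given trajectory for necessity, and assembling the continuous arcs and jumps for sufficiency. The paper's own proof is in fact far terser than yours---it simply declares that the condition is ``an algorithmic description of the necessary and sufficient conditions'' and that~\eqref{Eq:CondTheoremnStates} ensures each successive jump point is reachable from the successors of the previous one---so your write-up is a strictly more detailed version of the same argument, and the subtlety you flag about $L_{x_0}$ being defined via finite concatenations of constant-control flows (versus general measurable controls in Definition~\ref{Defn:HybridTraj}) is a genuine point the paper leaves implicit.
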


Theorem~\ref{thm:nStatesLinear} gives a necessary and sufficient condition for controllability. The conditions guarantee the algorithmic construction of trajectories between any two points, but it is an existence theorem. Thus for every two points the sequence of discrete modes and jumping points must be explicitly found so that the condition is satisfied. Let us give an example where
Theorem~\ref{thm:nStatesLinear} can be applied. Note that we must identify all  trajectories of HCS to determine the controllability of the system. 
In the example below we identify all possible kinds of trajectories in each mode and observe that together with the jump map we can cover the entire configuration manifold and move from one point to another by joining admissible trajectories. 

\begin{example}\label{Ex:theory}
Consider the following hybrid control system with three discrete modes $A=\{1,2,3\}$ described as follows:
\begin{itemize}
\item $M_1=\mathbb{R}^2$, $M_2=\mathbb{R}^2$, $M_3=\{(x,y)\in \mathbb{R}^2\; | \; x\geq -2\}$;
\item $E=\ds{\bigcup_{a=1}^3(M_a\times \mathbb{R})}$;
\item $U_1=[-1,1]$, $U_2=[-1,1]$, $U_3=[-1,0]$;
\item $D_a=M_a\times U_a$ for $a=1,2,3$;
\item the dynamics is given by
\begin{equation*} \rho(a,x,y,u)=\left\{ \begin{array}{lcl} (1,x,y,u,0) & {\rm if} & a=1 , \\ (2,x,y,-uy,ux) & {\rm if} & a=2 , \\ (3,x,y,u,0) & {\rm if} & a=3 . \end{array}\right. \end{equation*}
\end{itemize}
The jump map is the following one:
\begin{eqnarray*}
R\colon D&\rightrightarrows &D\\
(1,1,y,u_1)& \rightrightarrows & \{(2,1,y,u_2)\, |\, u_2\in U_2\}\\
(2,x,g(x,y,x_f,y_f),u_2)&  \rightrightarrows & \{(3,x,g(x,y,x_f,y_f),u_3)\, | \, u_3\in U_3\} \\
(3,-2,y,u_3)& \rightrightarrows & \{ (1,-2,y,u_1)\, | \, u_1\in U_1\}
\end{eqnarray*}
where the jump condition from 2 to 3 depends on the jump point $(x,y)$ in $M_2$ and the final point $(x_f,y_f)$ in $M_b$ depends on the function
\begin{equation}\label{eq:g}
g(x,y,x_f,y_f)=\left\{ \begin{array}{lcl}  \mbox{min} \{\sqrt{x^2+y^2},y_f\} & \mbox{if} &  y<y_f,\\
 \mbox{max} \{-\sqrt{x^2+y^2},y_f\} & \mbox{if} &  y\geq y_f\, .
 \end{array}\right.
\end{equation}
First of all, note that the system is discrete controllable because of the definition of $R$. Hence, Theorem~\ref{thm:nStatesLinear} can be used. In the jumps there is
freedom to choose the value of the controls because the jump map $R$ is a set--valued map. We must choose them conveniently so that
the final point can be reached. Let us construct some hybrid control trajectories of the hybrid control system under consideration.

\begin{figure}[h]\centering 
\includegraphics[scale=0.5]{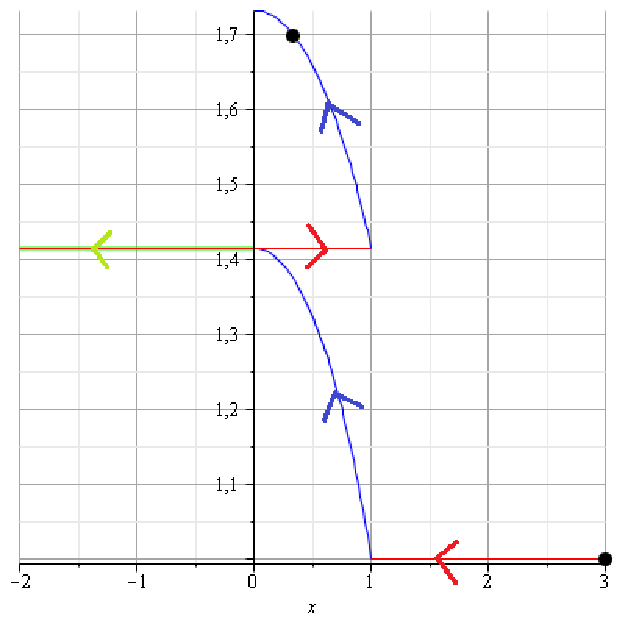} \hspace{4mm}  \includegraphics[scale=0.5]{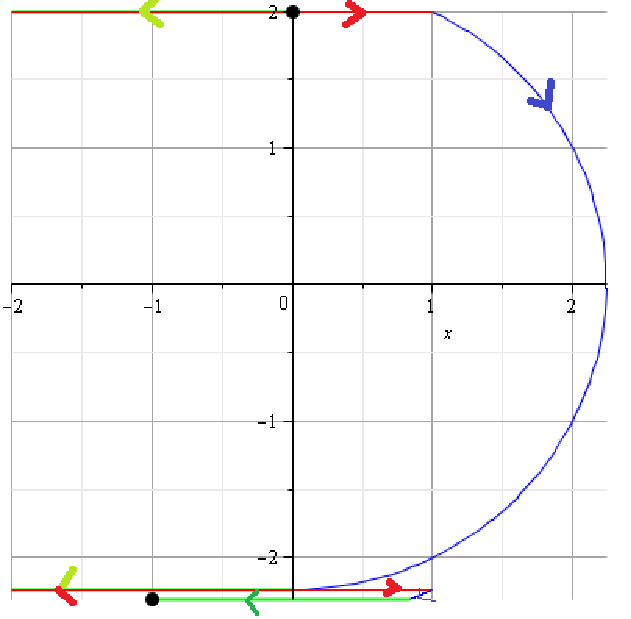}
\caption{Possible hybrid trajectories.} \label{Fig3modes}
\end{figure}

The left hand side of Figure~\ref{Fig3modes} 
shows a trajectory from $(3,1)$ starting with discrete mode 1 to $(0.332,1.697)$. The right hand side of Figure~\ref{Fig3modes}
shows a trajectory from $(0,2)$ starting with discrete mode 3 to $(-1,-2.3)$. The trajectories in red correspond with the discrete mode 1, the blue ones with the discrete mode 2, the green ones with the discrete mode 3.

The solutions of the control systems at discrete modes $1$ and $3$ are horizontal lines. At the mode $3$ the integral curves move towards
 $x=-2$ on $M_3$. At the discrete mode $2$ the trajectories with control $u$ and initial point $(\bar{x},\bar{y})$ are circles of
radius $\sqrt{\bar{x}^2+\bar{y}^2}$
 and centered at $(0,0)$. Hence the first argument in the minimum/maximum function in~\eqref{eq:g} is constant and equal to the radius. Note that the trajectories in the discrete mode 2 are the only way to reach any value for the $y$ coordinate, although more than one transition through the discrete
mode 2 might be necessary to get the final value $y_f$. In the discrete modes 1 and 3 the trajectory always evolves parallel to the $OX$--axis. 
We have identified all possible trajectories of the system and how they connect between the different modes.
It is clear from the trajectories associated with the different discrete modes that none of the discrete modes is
controllable by itself but all together make this HCS controllable because the conditions in Theorem~\ref{thm:nStatesLinear} are satisfied by construction. 
\end{example}

Let us consider some specific cases of hybrid control systems where some transversal notions are useful to characterize controllability when only two discrete modes exist and one of the systems is controllable. Associated with the jump map $R$, we introduce the set--valued map $R_{ab}\colon B_a \rightrightarrows D_b$ defined by $R_{ab}(y)=R(a,y)$ such that $\varphi_D(R(a,y))=b$ for $y$ in $B_a$. In all the proofs in the sequel Assumption~\ref{AssImp} is crucial to guarantee the existence of points to build the hybrid trajectories.

\begin{thm} \label{Prop:ControlFoliation} Let $\Sigma$ be a discrete controllable HCS such that $A=\{a,b\}$ and $G=\{(a,b),(b,a)\}$ and the nonlinear control system at the discrete mode $a$ is controllable. 
If the following conditions are satisfied for all $y_a\in B_a$
\begin{equation}\bigcup_{y\in R_{ab}(y_a)}L_{\tau_E(y)} =M_b,\label{Eq:Transverse}\end{equation}
and
\begin{equation}L_{\tau_E(y)} \bigcap \tau_E(B_b)\neq \emptyset \quad \mbox{for all } y\in D_b,\label{Eq:Transverse2}\end{equation}
then the HCS is controllable.

\begin{proof} Condition~\eqref{Eq:Transverse} guarantees that the set of all trajectories 
	having a successor from the discrete mode $a$ as initial condition 
covers the whole manifold $M_b$.
 Condition~\eqref{Eq:Transverse2} guarantees that if it is necessary to go back to the discrete mode $a$ from $b$ there exists such a trajectory in $M_b$.

The proof is obtained by construction of the trajectories given any two points $x_0, \;x_f\in M$. We need to consider four different cases:

\begin{description}

\item[Case 1]
If the initial and final conditions are in the discrete mode
$a$, by the hypothesis of controllability there is always a trajectory that joins both points within the discrete mode $a$.

\item[Case 2]  If
the initial condition $x_0$  is in $M_a$ and the final condition $x_f$ is in $M_b$, we must find a pair of points $(y_a,y_b)\in {\rm Graph}\, R_{ab}$ such that
the final condition is in $L_{\tau_E(y_b)}$.  Condition~\eqref{Eq:Transverse} guarantees that there exists a leaf in $M_b$ starting at ${\tau_E(y_b)}$ and containing the final point. Assumption~\ref{AssImp} in the jump map makes possible to find $y_a$ such that $(y_a,y_b)\in {\rm Graph}\, R_{ab}$. By the hypothesis of controllability at the mode $a$, it is always possible to find a path joining the
initial condition and $\tau_E(y_a)$. In this case, we do not need such strong conditions of the jump map. Surjectivity of $R_{ab}$ would have been enough.

 \item[Case 3] If the initial
condition $x_0$ is in $M_b$ and the final condition $x_f$ is in $M_a$, we must find a pair of points $(y_b,y_a)\in  {\rm Graph}\, R_{ba}$ such that $\tau_E(y_b)$ belongs to the intersection of $L_{x_0}\cap \tau_E(B_b)$. Condition~\eqref{Eq:Transverse2} guarantees that the intersection is not empty and $y_b$ exists because Assumption~\ref{AssImp} implies that for any point $x_b\in \tau_E(B_b)$ there exists $y_b\in B_b$ such that $\tau_E(y_b)=x_b$. The controllability hypothesis guarantees that there exists a path joining $\tau_E(y_a)$ and the final point. It does not matter the chosen $y_a$ because the system is controllable at the discrete mode $a$ in $M_a$.

\item[Case 4] If the initial and final conditions are
in the discrete mode $b$, it will be necessary to go through the discrete mode $a$ to obtain the trajectory. We divide this problem into two different ones: from the initial condition to a point in the discrete mode $a$ falls into case 3, from the mode $a$ to the final point falls into case 2.
\end{description}
\end{proof}
\end{thm}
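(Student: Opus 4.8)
The plan is to prove controllability by explicitly constructing a hybrid control trajectory between any two prescribed points $x_0, x_f \in M$, distinguishing cases according to which discrete modes the endpoints belong to. Since $M = A \times M$ and $A = \{a,b\}$, there are exactly four combinations of (initial mode, final mode), and the strategy in each case is to reduce to a combination of (i) the controllability hypothesis at mode $a$, which lets us freely join any two points inside $M_a$ (in particular to land on any prescribed point of $\tau_E(B_a)$), (ii) condition~\eqref{Eq:Transverse}, which says the successors under $R_{ab}$ of any jump point $y_a$ generate, via concatenated flows $L_{\tau_E(y)}$, all of $M_b$, and (iii) condition~\eqref{Eq:Transverse2}, which guarantees that from any state in $D_b$ one can reach the jump set $B_b$ and hence return to mode $a$. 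Assumption~\ref{AssImp} is invoked throughout: whenever a point $x_b$ in the base lies in $\tau_E(B_b)$, the fibre of $B_b$ over it has full dimension, so a genuine jump point $y_b \in B_b$ with $\tau_E(y_b) = x_b$ exists, and similarly for $B_a$; this is what allows the base-manifold constructions to lift to actual trajectories in $D$.

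First I would dispose of the two "easy" mixed cases. If $x_0 \in M_a$ and $x_f \in M_b$ (Case 2): pick any jump point $y_a \in B_a$ — one exists because $\Sigma$ is discrete controllable so $(a,b)\in G$ — use~\eqref{Eq:Transverse} to find $y \in R_{ab}(y_a)$ with $x_f \in L_{\tau_E(y)}$, lift $x_f$'s base position back through Assumption~\ref{AssImp} to obtain the corresponding successor, then use controllability at mode $a$ to steer from $x_0$ to $\tau_E(y_a)$, jump, and follow the concatenated flows in $L_{\tau_E(y)}$ to $x_f$. If $x_0 \in M_b$ and $x_f \in M_a$ (Case 3): use~\eqref{Eq:Transverse2} with the leaf $L_{x_0}$ to produce a point of $L_{x_0}\cap \tau_E(B_b)$, lift it via Assumption~\ref{AssImp} to $y_b \in B_b$, follow the flows in $L_{x_0}$ to $\tau_E(y_b)$, jump back to mode $a$ landing at some successor $\tau_E(y_a)$, and finish by controllability at $a$ from $\tau_E(y_a)$ to $x_f$.

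Then the remaining cases reduce to these. Case 1 ($x_0, x_f \in M_a$) is immediate from the controllability hypothesis at mode $a$. Case 4 ($x_0, x_f \in M_b$) is handled by splicing Case 3 and Case 2: route $x_0 \to $ (some point in $M_a$) as in Case 3, then (that point) $\to x_f$ as in Case 2, and concatenate the two hybrid trajectories at the intermediate mode-$a$ state. The main subtlety — the place I expect to have to be careful — is verifying that each constructed piece is a legitimate segment of a hybrid trajectory in the sense of Definition~\ref{Defn:HybridTraj}: the concatenated flows defining $L_{x_0}$ must stay inside $D_a$ (respectively $D_b$), which may be a manifold with corners, so one must check the inequality constraints $\Phi_{(a)}^{l_a}\ge 0$ are respected and that no "dead end" of the type~\eqref{eqDeadEnd} is encountered; and the transition points must actually lie in $\mathrm{Graph}\,R$, which is precisely where Assumption~\ref{AssImp} and the set-valued nature of $R$ do the work. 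Once these bookkeeping points are settled, assembling the four cases gives a hybrid control trajectory joining any $x_0$ to any $x_f$, which is the definition of controllability.
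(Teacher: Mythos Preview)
Your proposal is correct and follows essentially the same approach as the paper: a four-case decomposition by the modes of $x_0$ and $x_f$, with Case~1 handled by controllability at $a$, Cases~2 and~3 handled via conditions~\eqref{Eq:Transverse} and~\eqref{Eq:Transverse2} respectively together with Assumption~\ref{AssImp}, and Case~4 reduced to a concatenation of Cases~3 and~2 through an intermediate point in $M_a$. Your additional remarks about verifying that the concatenated flows remain in $D_a$, $D_b$ and avoid dead ends go slightly beyond the level of detail in the paper's own argument, but they are reasonable caveats rather than deviations from the method.
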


\begin{remark} Theorem~\ref{Prop:ControlFoliation} can be extended to more than two discrete modes for some particular graphs associated with the HCS. For instance, if the graph contains a tree of height one whose root is the controllable system. Then conditions~\eqref{Eq:Transverse} and~\eqref{Eq:Transverse2} must be satisfied for all the leaves of the root, see~\cite{GraphBook} for notions coming from graph theory.
\end{remark}

\begin{example}\label{Ex:2HCS}
Consider the following hybrid control system. Let $A=\{1,2\}$, $G=\{(1,2),(2,1)\}\subseteq A\times A$,
$M_1=\mathbb{R}^{\leq 0}\times \mathbb{R}$, $M_2=\mathbb{R}^{\geq 0}\times \mathbb{R}$, $D_1=\{(x,y,w_1,w_2)\, | \, x \leq 0, \, w_1,\, w_2 \in [-1,1] \}$,
 $D_2=\{(x,y,u)\, | \, x\geq 0, \,  u \in [-1,1]\}$. Let $\xi\in D_1\cup D_2$,
 \begin{equation*}
\rho(\xi)=\left\{\begin{array}{lcl}  (1,x,y,w_1,w_2) & {\rm if} & \xi\in D_1\, ,\\
 (2,x,y,u,u) & {\rm if} & \xi  \in D_2\, .\end{array} \right.
\end{equation*}
The jump map is given by:
\begin{eqnarray*}
R\colon D&\rightrightarrows &D\\
(1,0,y,w_1,w_2)& \rightrightarrows & \{(2,0,\widetilde{y},u)\, |\,\widetilde{y}\in \mathbb{R}\, , \, u  \in [-1,1]\},\\
(2,0,y,u)&  \rightrightarrows & \{(1,0,y,\widetilde{u},\widetilde{u})\, | \,  \widetilde{u}  \in [-1,1]\} \, .
\end{eqnarray*}
 Note that at the discrete mode $1$ the control system is fully actuated and controllable, whereas at the discrete mode 2
 the control system is single input with the control vector field $\partial / \partial x+\partial / \partial y$. As a consequence the admissible velocities of the trajectories in the mode 2 will be always proportional to that single control vector field.  Thus, the jump set is given by $B=\{(1,0,y,w_1,w_2)\,| \, w_1,w_2\in[-1,1]\} \cup \{(2,0,y,u)\,|\, u\in[-1,1]\}$ and it clearly satisfies that the fibers of $B_a$ over $M_a$ have maximum dimension as required in Assumption~\ref{AssImp} and used in the cases 2 and 3 of the proof of the Theorem~\ref{Prop:ControlFoliation}.
 It can be easily proved that conditions~\eqref{Eq:Transverse} and~\eqref{Eq:Transverse2} are satisfied for this example. Thus
 the HCS is controllable by Theorem~\ref{Prop:ControlFoliation}. Note that the hybrid trajectories could be discontinuous or absolutely continuous, that is, differentiable almost everywhere.

\end{example}

 A more general sufficient condition for controllability than the one in Theorem~\ref{Prop:ControlFoliation} is the following one where none of the discrete modes need to be controllable. The proof is once again algorithmic.

\begin{thm}\label{thm:2statesLinear}
Let $\Sigma$ be a discrete controllable HCS such that $A=\{a,b\}$. Let $x_a\in M_a$, 
denote by $S_b^{L_{x_a}}$ the set of successors on $D_b$ coming from the nonempty set
$B_a\cap \tau_E^{-1}\left(L_{x_a} \right)$, that is,
\begin{equation*}S_b^{L_{x_a}}=R_{ab}\left(B_a\cap \tau_E^{-1}\left(L_{x_a}\right)\right),
\end{equation*}
where $R_{ab}\colon D_a \rightrightarrows D_b$ is a set--valued jump map from discrete mode $a$ to mode $b$.
 If for every $x_a\in M_a$ and $x_b\in M_b$ with $a\neq b$ in $A$ there exists $y\in S_b^{L_{x_a}}$ such that
\begin{equation}\label{eq:thm}
x_b\in  L_{\tau_E(y)}  \, , 
\end{equation}
then the HCS is controllable.
\begin{proof}
By construction. It follows a similar reasoning as the proof of Theorem~\ref{Prop:ControlFoliation}. If the condition~\eqref{eq:thm} holds for every $x_a\in M_a$ and $x_b\in M_b$ with $a\neq b$ in $A$, then the cases 2 and 3 in the proof of  Theorem~\ref{Prop:ControlFoliation} are proved. It only remains to study the case 1 and 4, when the initial and final conditions are in the same discrete mode
	$a$. If so, we proceed as in the case 4 by taking an intermediate point in $M_b$ whenever the points cannot be joined by continuing in the same discrete mode.

Even though none of the discrete modes are controllable, the HCS is controllable because the hypothesis guarantee the construction of a control hybrid trajectory joining any two points $x_a$ and $x_b$ .
\end{proof}
\end{thm}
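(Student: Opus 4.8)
The plan is to reuse the four-case analysis from the proof of Theorem~\ref{Prop:ControlFoliation}, but to feed it with the single hypothesis~\eqref{eq:thm} applied to both ordered pairs $(a,b)$ and $(b,a)$ in place of the two leaf-covering conditions~\eqref{Eq:Transverse}--\eqref{Eq:Transverse2}; since no discrete mode is assumed controllable, every arc of every trajectory will be a concatenation of flows of the form recorded in~\eqref{eq:Lx0}. First I would note that discrete controllability with $A=\{a,b\}$ forces, by Proposition~\ref{Prop:DiscreteControl}, the graph to contain both edges $(a,b)$ and $(b,a)$, so both cross-mode jump maps $R_{ab}$ and $R_{ba}$ are available, and the nonemptiness built into the statement (the sets $B_a\cap\tau_E^{-1}(L_{x_a})$, hence the successor sets $S_b^{L_{x_a}}$, are nonempty) makes~\eqref{eq:thm} a non-vacuous constraint.

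The two cross-mode cases are the building blocks. For $x_0\in M_a$, $x_f\in M_b$ I would apply~\eqref{eq:thm} with $x_a=x_0$, $x_b=x_f$ to get $y\in S_b^{L_{x_0}}$ with $x_f\in L_{\tau_E(y)}$; by the definition of $S_b^{L_{x_0}}$ there is $z\in B_a\cap\tau_E^{-1}(L_{x_0})$ with $y\in R_{ab}(z)$. The relation $\tau_E(z)\in L_{x_0}$ gives, via~\eqref{eq:Lx0}, a concatenation of flows of mode-$a$ vector fields from $x_0$ whose projected endpoint is $\tau_E(z)$; using Assumption~\ref{AssImp} one arranges this concatenation to arrive at the jump point $z$ itself and then triggers $R_{ab}$, landing at the successor $y\in D_b$. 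Since $x_f\in L_{\tau_E(y)}$, a further concatenation of flows of mode-$b$ vector fields reaches $x_f$, and splicing the two arcs across the jump produces an admissible hybrid control trajectory in the sense of Definition~\ref{Defn:HybridTraj} --- each arc has positive duration, so no discrete transition times are introduced. The case $x_0\in M_b$, $x_f\in M_a$ is obtained by exchanging $a$ and $b$ and using~\eqref{eq:thm} for the pair $(b,a)$. For the same-mode cases, say $x_0,x_f\in M_a$: if they are already joined inside mode $a$ we are done; otherwise I would pick any auxiliary $x_b\in M_b$, concatenate the $(a,b)$-trajectory from $x_0$ to $x_b$ with the $(b,a)$-trajectory from $x_b$ to $x_f$, and observe that the terminal mode-$b$ arc of the first and the initial mode-$b$ arc of the second merge into a single mode-$b$ arc, so that only the two genuine jumps survive; $x_0,x_f\in M_b$ is symmetric. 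This exhausts all configurations of $(x_0,x_f)$.

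The one genuinely substantive step --- everything else is bookkeeping of concatenations and of which fibre of $D$ one currently sits in --- is the passage from the purely base-manifold reachability statement $\tau_E(z)\in L_{x_0}$ to an honest admissible hybrid control trajectory that actually hits the jump point $z\in B_a\subseteq M_a\times U_a$ with an admissible control value and then jumps. This is exactly what Assumption~\ref{AssImp} delivers: once the base point $\tau_E(z)$ of a jump point has been reached, the jump map is defined over the whole fibre, so the control value at the time of impact is immaterial. I would also check that the nonemptiness of $B_a\cap\tau_E^{-1}(L_{x_a})$ is stable under the iteration used in the same-mode cases, which it is because the auxiliary point $x_b$ ranges over all of $M_b$ and~\eqref{eq:thm} is assumed for every such $x_b$.
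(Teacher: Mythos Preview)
Your proposal is correct and follows essentially the same approach as the paper's own proof: both reduce to the four-case analysis of Theorem~\ref{Prop:ControlFoliation}, handle the two cross-mode cases directly from hypothesis~\eqref{eq:thm} applied to the ordered pairs $(a,b)$ and $(b,a)$, and treat the same-mode cases by routing through an auxiliary point in the other mode. Your write-up simply makes explicit several steps the paper leaves implicit --- the invocation of Assumption~\ref{AssImp} to pass from base-level reachability $\tau_E(z)\in L_{x_0}$ to an actual jump at $z$, and the merging of the two mode-$b$ arcs at the auxiliary point so that no spurious instantaneous transition is introduced --- but the underlying argument is the same.
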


From a computational perspective it is useful to rewrite the conditions for controllability in Theorem~\ref{thm:2statesLinear} in an infinitesimal way.

\begin{prop}\label{Prop:Infinit} Let $\Sigma$ be a HCS. If $\tau_E(B_a)\cap L_{x_a}\neq \emptyset $ for all the leaves $L_{x_a}$ in the regular foliation determined by $\rho(D_a)$ for every $x_a\in M_a$,
	then
	\begin{equation}\label{Condition-corol}
	T_x\tau_E(B_a)+T_x L_{x_a} = T_x M_a, \quad \forall  \; x\in \tau_E(B_a)\cap L_{x_a}\, ,
	\end{equation}
	where $L_{x_a}$ is the leaf of the foliation associated with the control system in the mode $a$.
\end{prop}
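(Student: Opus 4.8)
The plan is to localise the claim with the help of the Orbit Theorem and then to read off \eqref{Condition-corol} from the hypothesis in a foliated chart. First I would recall that at the discrete mode $a$ the family $\rho(D_a)=\{X_a^{u}:u\in U_a\}$ has well-defined orbits, which under the regularity assumption are the leaves of a regular foliation $\mathcal F_a$ of $M_a$; by the Orbit Theorem the tangent space to the leaf $L_{x_a}$ at a point $x$ equals the evaluation at $x$ of the Lie algebra generated by $\{X_a^{u}\}$ (for control-linear systems, where $U_a$ is symmetric, this is precisely $T_xL_{x_a}$ with $L_{x_a}$ as in \eqref{eq:Lx0}). Fix $x\in\tau_E(B_a)\cap L_{x_a}$, put $n=\dim M_a$, $k=\dim L_{x_a}$, and choose a flow-box chart $\psi\colon V\to\R^{k}\times\R^{n-k}$ about $x$ in which the plaques of $\mathcal F_a$ are the slices $\R^{k}\times\{c\}$; then $d\psi_x(T_xL_{x_a})=\R^{k}\times\{0\}$, so it suffices to show that $q:=\mathrm{pr}_2\circ(\psi|_{\tau_E(B_a)\cap V})$ has surjective differential at $x$, for then $d\psi_x(T_x\tau_E(B_a))$ projects onto $\{0\}\times\R^{n-k}$ and the two tangent spaces together span $\R^{n}$.

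Next I would exploit the hypothesis that $\tau_E(B_a)$ meets every leaf of $\mathcal F_a$: since the local leaf space of $\mathcal F_a$ is the transverse factor $\R^{n-k}$, this says exactly that $q$ is onto a neighbourhood of $q(x)$, i.e.\ $\tau_E(B_a)$ is a transverse section of $\mathcal F_a$. Assuming, as holds in all the examples of the paper and compatibly with Assumption~\ref{AssImp} on the $U_a$-fibres of $B_a$, that $\tau_E(B_a)$ is an embedded submanifold of codimension equal to $k$, the surjection $q$ runs between manifolds of the same dimension $n-k$, so its differential is onto on the dense set of its regular values, and after shrinking the chart we may take $x$ to be such a point. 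This gives \eqref{Condition-corol}; equivalently, the intersection $\tau_E(B_a)\cap L_{x_a}$ is clean, $T_x(\tau_E(B_a)\cap L_{x_a})=T_x\tau_E(B_a)\cap T_xL_{x_a}$, and the dimension count $\dim T_x\tau_E(B_a)+\dim T_xL_{x_a}-\dim\big(T_x\tau_E(B_a)\cap T_xL_{x_a}\big)=n$ closes the argument.

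The main obstacle is precisely this last upgrade from set-theoretic to infinitesimal surjectivity: meeting every leaf only yields that $q$ is onto, and a submanifold can cover a foliation while remaining tangent to some of its leaves (think of the cubic-graph curve $\{(t,t^{3})\}$ crossing every horizontal line of $\R^{2}$ yet tangent to one of them at the origin). So the fully rigorous version of the proposition needs the intersections $\tau_E(B_a)\cap L_{x_a}$ to be regular, which is what the word \emph{regular} for $\mathcal F_a$ together with the complementary-dimension submanifold structure of $\tau_E(B_a)$ is meant to supply. I would make this hypothesis explicit and, if a completely general statement is wanted, restrict \eqref{Condition-corol} to the open dense locus of points at which $\tau_E(B_a)$ is transverse to $\mathcal F_a$; that suffices for feeding the condition back into the infinitesimal reformulation of Theorem~\ref{thm:2statesLinear}.
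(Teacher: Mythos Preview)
Your localisation in a foliated chart and the reduction to ``the projection of $T_x\tau_E(B_a)$ onto the transverse factor is onto'' is exactly the route the paper takes. The paper's argument differs only in how it tries to produce the transverse tangent directions: instead of your regular-value/dimension count, it writes the intersection point as $x^s=x^s(c^\alpha)$ (one solution of $\Phi(x^s,c^\alpha)=0$ for each leaf-label $c^\alpha$), differentiates the relation $\Phi(x^s(c^\alpha),c^\alpha)=0$ in $c^\alpha$, and reads off that $\partial/\partial x^\alpha+(\partial x^s/\partial x^\alpha)\,\partial/\partial x^s\in\ker D\Phi(x)=T_x\tau_E(B_a)$. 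Note that this route does \emph{not} need your complementary-dimension hypothesis on $\tau_E(B_a)$: whatever $\dim\tau_E(B_a)$ is, those vectors together with $T_xL_{x_a}=\mathrm{span}\{\partial/\partial x^s\}$ already span $T_xM_a$.

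The gap you identify is real and is present in the paper's proof as well. The hypothesis only yields a set-theoretic section $c^\alpha\mapsto x^s(c^\alpha)$; the paper simply asserts ``considering $x^s$ as a function of $c^\alpha$ we can differentiate'' without justifying smoothness, which is precisely the implicit-function step that presupposes the transversality one is trying to prove. Your cubic example $\tau_E(B_a)=\{(t,t^3)\}$ against the horizontal foliation of $\R^2$ is a genuine counterexample to the proposition as stated: every leaf is met, yet \eqref{Condition-corol} fails at the origin. Your proposed fix (restrict \eqref{Condition-corol} to the open dense set of regular points, or add a clean/transverse-intersection hypothesis) is the honest repair; be aware that your Sard-type sentence conflates regular \emph{values} with regular \emph{points}, and in any case only delivers the conclusion generically, not at the specific $x$ the proposition names.
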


\begin{proof} It is necessary 
	for this proof some knowledge on theory of foliations~\cite{FoliationBook} and the orbit theorem~\cite{AndrewNotes}.
	Regular leaves foliate the manifold $M_a$ in disjoint submanifolds having all the same dimension. The definition of foliation implies that $\bigcup_{x_a\in M_a} L_{x_a}=M_a$.
	
Locally, we can assume that the leaves are described by $\{(x^s,x^\alpha)\, | \, x^\alpha=c^\alpha\}$ where the number of $s$-coordinates corresponds to the dimension of the leaves.

Assuming that the set $\tau_E(B_a)$ is a submanifold, it could be defined by constraints $\Phi(x^s,x^\alpha)=0\in \mathbb{R}^{n_a-{\rm dim}\tau_E(B_a)}$.

Thus, $L_{x_a}\cap \tau_E(B_a)$ is described locally by $\{(x^s,c^\alpha)\, | \, \Phi(x^s,c^\alpha)=0\}$. The assumption of the theorem guarantees that for every $c^\alpha$ there exist $x^s$ such that $\Phi(x^s,c^\alpha)=0$.
Considering $x^s$ as a function of $c^\alpha$ we can differentiate the constraints with respect to $c^\alpha$:

\begin{equation*}
\dfrac{\partial \Phi}{\partial x^\alpha}(x^s(c^\alpha),c^\alpha)=\dfrac{\partial \Phi}{\partial x^s}(x^s(c^\alpha),c^\alpha) \, \dfrac{\partial x^s}{\partial x^\alpha}(c^\alpha)+\dfrac{\partial \Phi}{\partial x^\alpha}(x^s(c^\alpha),c^\alpha)=0\, .
\end{equation*}

As $T_x\tau_E(B_a)=\ker D\Phi(x)$, at the intersection points
	\begin{equation*}
T_x\tau_E(B_a)={\rm span}\left\{\dfrac{\partial}{\partial x^\alpha}+\dfrac{\partial x^s}{\partial x^\alpha}\dfrac{\partial}{\partial x^s}\right\}\,.
	\end{equation*}
	Moreover,
	\begin{equation*}
	T_xL_{x_a}={\rm span}\left\{\dfrac{\partial}{\partial x^s}\right\}\, .
	\end{equation*}
	We can conclude the sum of the above tangent spaces spans the whole tangent space of $M_a$.
	
	It is crucial that the assumption is satisfied for all $x_a\in M_a$ so that $\tau_E(B_a)$ meets all the leaves and
	\begin{equation*}
	T_x\tau_E(B_a)+T_x L_{x_a} = T_x M_a, \quad \forall  \; x\in \tau_E(B_a)\cap L_{x_a}.
	\end{equation*}
 The reasoning still works if not all the leaves have the same dimension, but to simplify the local proof we assume that the foliation is regular.
\end{proof}

Note that the submanifolds $\tau_{E}(B_a)$ and $L_{x_a}$ are not necessarily transversal in the usual sense since the intersection of the tangent spaces could be nonempty.

Example~\ref{Ex:Theoretic} contains one case where the assumption in Proposition~\ref{Prop:Infinit} is satisfied, and hence~\eqref{Condition-corol} follows. However, in the left-hand example in Figure~\ref{Fig:Example} depending on how the jump map is defined the HCS will be controllable or not. The right-hand example in Figure~\ref{Fig:Example} has one discrete mode where the assumption in Proposition~\ref{Prop:Infinit} is not satisfied and the property~\eqref{Condition-corol} cannot be written because the intersection is empty.

Theorems~\ref{Prop:ControlFoliation} and~\ref{thm:2statesLinear} have shown that to obtain some global controllability results it is necessary some extra conditions to guarantee that after the jump is possible to find a suitable leave to connect any two points in $M$. Proposition~\ref{Prop:Infinit} guarantees that from any discrete mode it is always possible to jump to a different discrete mode, if necessary. This result is valid for any number of discrete modes, not only for two modes as in the previous results.

\begin{thm}\label{Corol:Control} Let $\Sigma$ be a discrete controllable HCS with strongly connected graph $G$. If for every $(a,b)\in G$
\begin{eqnarray}
\tau_E(B_a)\cap L_{x_a}\neq \emptyset && \mbox{for every }x_a\in M_a, \label{Extra-Assumption1} \\
	T_{x_b}(\tau_E(R_{ab}(y_a)))+T_{x_b}L_{x_b}=T_{x_b}M_b && \mbox{for every } y_a\in B_a  \mbox{ and } x_b\in \tau_E(R_{ab}(y_a)),\label{Extra-Assumption2}
	\end{eqnarray}	
	then $\Sigma$ is global controllable.
\end{thm}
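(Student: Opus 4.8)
The plan is to reduce the statement to the two-mode case already handled in Theorems~\ref{Prop:ControlFoliation} and~\ref{thm:2statesLinear}, combined with the infinitesimal criterion of Proposition~\ref{Prop:Infinit}, and then to propagate connectivity through the strongly connected graph $G$. First I would observe that condition~\eqref{Extra-Assumption1} is precisely the hypothesis of Proposition~\ref{Prop:Infinit}, applied to every edge $(a,b)\in G$; hence for each such edge we obtain the transversality-type identity
\begin{equation*}
T_x\tau_E(B_a)+T_xL_{x_a}=T_xM_a,\qquad \forall\, x\in\tau_E(B_a)\cap L_{x_a}.
\end{equation*}
Together with~\eqref{Extra-Assumption2}, which is the analogous full-rank condition on the successor sets $\tau_E(R_{ab}(y_a))$ inside $M_b$, this guarantees that (i) from any leaf in $M_a$ one can always reach the jump set $B_a$, and (ii) after jumping along the edge $(a,b)$ the successor set meets the leaves of $M_b$ in a way that, leaf by leaf, spans $T_{x_b}M_b$. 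The local picture is exactly the one in the proof of Proposition~\ref{Prop:Infinit}: near an intersection point the leaf is a coordinate slice $\{x^\alpha=c^\alpha\}$ and the other submanifold is cut out by constraints $\Phi=0$ whose differential is surjective onto the normal directions, so locally every leaf of $M_b$ is met and the union of leaves through $\tau_E(R_{ab}(y_a))$ covers a neighborhood; globally, since~\eqref{Extra-Assumption1} holds for \emph{all} $x_a\in M_a$ and the foliation covers $M_a$, no leaf is missed.

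Next I would fix two arbitrary points $x_0,x_f\in M$, say $x_0\in M_{a}$ and $x_f\in M_{b}$. Since $G$ is strongly connected, pick a directed path $a=a_0,a_1,\dots,a_k=b$ in $G$. I would then construct the hybrid trajectory inductively along this path, mimicking Case~2 of the proof of Theorem~\ref{Prop:ControlFoliation}: starting on the leaf $L_{x_0}\subseteq M_{a_0}$, use~\eqref{Extra-Assumption1} for the edge $(a_0,a_1)$ to reach a point $x\in\tau_E(B_{a_0})\cap L_{x_0}$, lift it via Assumption~\ref{AssImp} to some $y_{a_0}\in B_{a_0}$, and jump via $R_{a_0a_1}$ to a successor in $D_{a_1}$. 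By~\eqref{Extra-Assumption2} (and the union-of-leaves consequence drawn above) the leaves through the successor set cover $M_{a_1}$, so we may choose the successor whose leaf also meets $\tau_E(B_{a_1})$, invoking~\eqref{Extra-Assumption1} for the edge $(a_1,a_2)$; repeat. This yields, after $k$ jumps, a trajectory ending on a leaf of $M_b=M_{a_k}$ that covers $M_b$, and in particular we can arrange by the last application of~\eqref{Extra-Assumption2} that the final leaf contains $x_f$. Concatenating the continuous arcs (each an admissible concatenation of flows as in~\eqref{eq:Lx0}) with the jumps gives the required hybrid control trajectory. The remaining case $a=b$ is handled exactly as Case~4 in the proof of Theorem~\ref{Prop:ControlFoliation}: route through any neighboring mode $c$ with $(a,c),(c,a)\in G$ (which exists by strong connectedness), reducing to two applications of the construction just described.

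The main obstacle I anticipate is the precise bookkeeping needed to upgrade the infinitesimal spanning condition~\eqref{Extra-Assumption2} into the global statement that the union $\bigcup_{y\in R_{ab}(y_a)}L_{\tau_E(y)}$ actually covers $M_b$ — the tangent-space identity is only a local surjectivity statement, so one must argue that the set of points of $M_b$ reachable this way is open (from~\eqref{Extra-Assumption2}) and closed (from completeness of the leaves / properness of the foliation), hence all of $M_b$ when $M_b$ is connected; here one leans on the Orbit Theorem~\cite{AndrewNotes,S73} to identify $L_{x}$ with an immersed orbit submanifold and on regularity of the foliation as in Proposition~\ref{Prop:Infinit}. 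A secondary subtlety is that $M_b$ may have corners (e.g.\ the constraint $x\geq-2$ in Example~\ref{Ex:theory}), so the covering argument must be phrased in the relative topology; this is already the convention adopted throughout Section~\ref{Sec:ControlTestHCS}, and~\eqref{Extra-Assumption1} being required at \emph{every} $x_a$ is exactly what rules out a leaf hiding in a corner. Modulo these points, the construction is algorithmic and terminates after at most $|A|$ jumps along the chosen path, so $\Sigma$ is global controllable.
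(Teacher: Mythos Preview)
Your approach is essentially the same as the paper's: split into the cases $a\neq b$ and $a=b$, use strong connectedness to pick a directed path of modes, invoke~\eqref{Extra-Assumption1} at each step to reach the jump set and~\eqref{Extra-Assumption2} to continue into the next mode, and handle the same-mode case by routing through a neighboring mode. The paper's own proof is in fact terser than yours and does not address the infinitesimal-to-global passage you flag for~\eqref{Extra-Assumption2}; your caution there is warranted, but the construction you sketch is exactly what the paper intends.
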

\begin{proof}
	It is important that the set $\tau_E(R_{ab}(y_a))$ does not only contain one point, otherwise the tangent space is not defined. Here it is highlighted once again the need of the jump map to be a set--valued map so that controllability could be gained. As in the proofs of Theorems~\ref{Prop:ControlFoliation} and~\ref{thm:2statesLinear} we consider the different cases to prove global controllability.

	\begin{description}
		\item[Case 1] Take $x_a$ in $M_a$ and $x_b$ in $M_b$ with $a\neq b$. As the system $\Sigma$ is discrete controllable, there exists a sequence of discrete modes that will take us from $a$ to $b$.  At every discrete mode condition~\eqref{Extra-Assumption1} guarantees that from any starting point the jump set is reached. Condition~\eqref{Extra-Assumption2} guarantees that the hybrid control trajectory continues onto the new discrete mode. Thus, it is possible to construct a hybrid control trajectory from $x_a$ to $x_b$.
		\item[Case 2] Take two points in the same discrete mode, $x_a$ and $\widetilde{x_a}$. We consider an intermediate point $x_b$ at a different discrete mode, $b\neq a$, that can be reached because of Case 1. Thus, there exists a hybrid control trajectory from $x_a$ to $x_b$ and from $x_b$ to $\widetilde{x_a}$. In some cases it might not be necessary to jump to a different discrete mode if the systems $\Sigma_a$ is controllable as happens in Theorem~\ref{Prop:ControlFoliation}.
	\end{description}
\end{proof}

The controllable case in Example~\ref{Ex:Theoretic} whose leaves are plotted in Figure~\ref{Fig:Example} can be established using Theorem~\ref{Corol:Control}. Note that
Theorem~\ref{Corol:Control} contains a less restrictive condition for controllability than the one stated in Theorem~\ref{thm:nStatesLinear} where all the leaves
arising from one discrete mode must intersect with all the leaves in the following discrete mode.

In the classical control literature 
the infinitesimal characterization of controllability in terms of brackets of vector fields associated with the control-linear systems is well-known~\cite{NijmeijerVanDerSchaft}
These results are derived  from Chow-Rashevsky theorem~\cite{Chow} that states that a control-linear system is locally controllable if the
 involutive closure of the input vector fields spans the entire tangent space of the state manifold at every point,
as long as the zero control is in the interior of the control set. Similarly, a control-affine system is locally
controllable if the involutive closure of the drift vector
field and the input vector fields spans the entire tangent space of the state manifold at every point and any possible obstruction to controllability is cancelled by some vector fields in the involutive closure \cite{2005BulloLewisBook}.

Some effort to extend these results to hybrid control systems has been made in the literature, but strong assumptions such as the jump set is the entire manifold are needed to prove those results, see~\cite{2002BulloZefran}. These assumptions  make the conditions~\eqref{Extra-Assumption1} and~\eqref{Extra-Assumption2} in Theorem~\ref{Corol:Control} always true. Hence, the jump map does not play any role in deciding the controllability of the HCS. The results developed in this paper show how the geometry of the set of jump points has a crucial role in the controllability of the hybrid control systems.

If the nonlinear control systems at each discrete mode are control-linear systems or control-affine systems, the conditions~\eqref{Extra-Assumption1} and~\eqref{Extra-Assumption2} in Theorem~\ref{Corol:Control} can be rewritten infinitesimally using the well-known Orbit Theorem~\cite{S73}. We close this section by taking a control-affine system at
each discrete mode:
\[
\dot{x}_a=X_{0_a}(x_a)+u^{s_a} X_{s_a}(x_a), \quad x_a\in M_a, 	\quad (u^1,\dots,u^{k_a})\in U_a, \quad 1\leq s_a\leq k_a \leq n_a=\dim M_a, \quad a\in A\, .
\]
The control system can be described by the following affine distribution
\[
{\mathcal C}_a=X_{0_a}+\hbox{span }\{ X_{1_a},\dots, X_{k_a}\}\, .
\] These systems are called hybrid control affine systems (HCAS).
The leaves obtained from ${\mathcal C}$ are defined as in~\eqref{eq:Lx0}.

According to the notation in Section~\ref{Sec:GenHybSyst}, $D_a=M_a\times U_a$
and $\rho_a: M_a\times U_a\longrightarrow TM_a$, \\$\rho_a(x_a, u^1,\dots, u^{k_a})=(x_a, X_{0_a}(x_a)+u^{s_a} X_{s_a}(x_a))$.

Observe that in our particular case $\rho_a(D_a)={\mathcal C}_a$ if $0$ is in the interior of every control set $U_a$. The jump map of the HCS must satisfy Assumption~\ref{AssImp}. 

\remark Assume that the drift vector field vanishes identically so that the HCAS is control-linear. Let $\hbox{Lie}^{(\infty)}({\mathcal C})$ be the smallest Lie subalgebra of ${\mathfrak X}(M)$ containing ${\mathcal C}$, that is,
\begin{eqnarray*}
	\hbox{Lie}^{(\infty)}({\mathcal C})&=&\hbox{span }\left\{ [X_l, [X_{l-1}, \ldots [X_2, X_1]\ldots ]]  \hbox{ with $l\in {\mathbb N}\backslash \{0\}$}, \; X_1, \ldots, X_l \in {\mathfrak X}(M),\right.
	\\
	&&\left.   \quad \quad
	X_s(x)\in {\mathcal C}_x,  x\in M \hbox{ for all }   1\leq s \leq l\right\}\, .
\end{eqnarray*}
As $\hbox{Lie}^{(\infty)}({\mathcal C})$ is an involutive distribution, by Frobenius' Theorem the maximal integral manifolds define a generalized foliation $\mathcal{L}$ of $M$.  The notion of ``generalized" means that the maximal integral manifolds could have different dimension depending on the point.

Whenever ${\rm Lie}^{(\infty)}({\mathcal C}_a)$ is a locally finitely generated submodule of vector fields, the Orbit Theorem  guarantees that \begin{equation*}{\rm Lie}^{(\infty)}({\mathcal C}_a)_x
=T_x L_{x_a} \end{equation*}
for every $x$ in $M_a$~\cite{AndrewNotes}. The equality also holds in the analytic case.  Under those assumptions, the conditions~\eqref{Extra-Assumption1} and~\eqref{Extra-Assumption2} can be checked respectively in an infinitesimal way by computing Lie brackets as stated in the following result.

\begin{corol} Let $\Sigma$ be a discrete controllable HCAS with strongly connected graph $G$. If for every $(a,b)\in G$, 
\begin{eqnarray*}\label{Extra-Assumption}
	T_x\tau_E(B_a)+{\rm{Lie}}_{x}^{(\infty)}({\mathcal C}) = T_x M_a &&\mbox{for every} \quad x\in \tau_E(B_a)\cap L_{x_a}\, ,
\\	T_{x_b}\tau_E(R_{ab}(y_a))+{\rm{Lie}}_{x_b}^{(\infty)}({\mathcal C})=T_{x_b}M_b && \mbox{for every} \quad x_b\in \tau_E(R_{ab}(y_a))\, ,
	\end{eqnarray*}
then $\Sigma$ is locally accessible.
\end{corol}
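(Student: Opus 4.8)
The plan is to reduce this corollary to Theorem~\ref{Corol:Control} by showing that its two infinitesimal hypotheses are, under the standing assumptions on the control-affine data, equivalent to (or imply) the hypotheses~\eqref{Extra-Assumption1} and~\eqref{Extra-Assumption2} of that theorem. First I would invoke the Orbit Theorem in the form recalled just before the statement: since each ${\rm Lie}^{(\infty)}(\mathcal{C}_a)$ is assumed locally finitely generated (or the system is analytic), we have the pointwise identification ${\rm Lie}^{(\infty)}(\mathcal{C}_a)_x = T_x L_{x_a}$ for every $x\in M_a$, and moreover the leaves $L_{x_a}$ defined by concatenations of flows as in~\eqref{eq:Lx0} coincide with the maximal integral manifolds of the generalized foliation $\mathcal{L}_a$. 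Substituting this identification into the two displayed infinitesimal conditions of the corollary turns them verbatim into: $T_x\tau_E(B_a)+T_xL_{x_a}=T_xM_a$ for all $x\in\tau_E(B_a)\cap L_{x_a}$, and $T_{x_b}\tau_E(R_{ab}(y_a))+T_{x_b}L_{x_b}=T_{x_b}M_b$ for all $x_b\in\tau_E(R_{ab}(y_a))$.

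Next I would observe that the first of these, combined with the hypothesis carried over implicitly from Proposition~\ref{Prop:Infinit}-type reasoning (the nonemptiness $\tau_E(B_a)\cap L_{x_a}\neq\emptyset$ for every $x_a$), is exactly the transversality-type statement that, by the argument in the proof of Proposition~\ref{Prop:Infinit}, is equivalent to $\tau_E(B_a)\cap L_{x_a}\neq\emptyset$ for every leaf — which is precisely condition~\eqref{Extra-Assumption1}. Strictly, Proposition~\ref{Prop:Infinit} gives the implication ``nonempty intersection of all leaves'' $\Rightarrow$ ``the tangent-space sum condition''; here I want to use the sum condition together with an openness argument to conclude that the intersection, once nonempty, persists on all nearby leaves, and then a connectedness/continuation argument on the space of leaves to propagate it to all leaves. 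The cleanest route is: the sum condition at a point of $\tau_E(B_a)\cap L_{x_a}$ forces, by the implicit function theorem applied to the defining constraints $\Phi$ of $\tau_E(B_a)$ restricted to the transverse coordinates, that the set of leaf-parameters $c^\alpha$ for which $\tau_E(B_a)$ meets the leaf is open; combined with the standing hypothesis that it is also nonempty on a dense/total set, one gets~\eqref{Extra-Assumption1}. The second condition is literally~\eqref{Extra-Assumption2}.

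Having matched the hypotheses, I would then simply apply Theorem~\ref{Corol:Control} to conclude that $\Sigma$ is global controllable, and a fortiori locally accessible (global controllability trivially implies that ${\mathcal R}(x_0,\le T)$ has nonempty interior for suitable $T$ whenever $M_a$ has positive dimension, since every neighborhood of $x_0$ is reachable). I should note the caveat that the corollary as stated concludes only local accessibility, which is the weaker claim, so no sharpness is lost; if one wanted global controllability the same proof gives it. I would also record explicitly the two places where the set-valued nature of $R$ is essential: the tangent space $T_{x_b}\tau_E(R_{ab}(y_a))$ is only meaningful because $\tau_E(R_{ab}(y_a))$ is a positive-dimensional submanifold of $M_b$ — this is where Assumption~\ref{AssImp} (maximal-dimensional fibers of $B$) is used — and the propagation argument in the previous paragraph needs the jump image to sweep out enough of $M_b$.

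The main obstacle I expect is the passage from the pointwise tangent-space sum condition~\eqref{Extra-Assumption1} back to the global nonemptiness-of-intersection-for-every-leaf statement required by Theorem~\ref{Corol:Control}: the sum condition is a local, transversality-flavoured statement, and deducing from it that $\tau_E(B_a)$ actually meets every leaf requires either an additional global hypothesis (which the corollary seems to leave implicit, mirroring the ``it is crucial that the assumption is satisfied for all $x_a\in M_a$'' remark in Proposition~\ref{Prop:Infinit}) or a careful connectedness argument on the leaf space, which can fail if the foliation is only generalized (leaves of varying dimension) or if $M_a$ has complicated topology. In the write-up I would therefore either assume, as the companion Proposition~\ref{Prop:Infinit} does, that the intersection is nonempty for all leaves and use~\eqref{Extra-Assumption1} only to guarantee the jump-point can be reached robustly, or restrict to the regular-foliation case where the leaf space is a manifold and the open-plus-dense argument closes cleanly.
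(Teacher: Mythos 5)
Your overall route --- identify ${\rm Lie}^{(\infty)}({\mathcal C}_a)_x$ with $T_xL_{x_a}$ via the Orbit Theorem and then feed the two displayed conditions into Theorem~\ref{Corol:Control} --- is exactly the argument the paper intends; it offers no separate proof of the corollary, the preceding remark being the whole justification. Your observation that the first condition is vacuous on any leaf that fails to meet $\tau_E(B_a)$, so that the nonemptiness hypothesis~\eqref{Extra-Assumption1} is not literally recovered, is a fair criticism of the statement as written; but note that your proposed repair (openness of the set of leaf parameters meeting $\tau_E(B_a)$ plus a density/connectedness argument) imports hypotheses that appear neither in the corollary nor in Proposition~\ref{Prop:Infinit}, so it does not close that gap so much as relocate it.

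The genuine error is your final step: you claim the reduction yields global controllability, with local accessibility following \emph{a fortiori}. For a control-affine system with nonzero drift, the set $L_{x}$ of~\eqref{eq:Lx0} is built from forward-time flows only, whereas the Orbit Theorem equality ${\rm Lie}^{(\infty)}({\mathcal C}_a)_x=T_xL_{x_a}$ invoked by the paper really concerns the orbit (positive and negative times), of which the forward reachable set is in general a proper subset having nonempty interior. Consequently the Lie-bracket conditions of the corollary certify only that the relevant reachable sets are open in the right places; they cannot certify that a prescribed target $x_b$ actually lies in $L_{\tau_E(y)}$, which is what the exact point-to-point steering in the proof of Theorem~\ref{Corol:Control} requires. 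This is precisely why the conclusion is weakened to local accessibility, and why the paper immediately remarks that controllability would additionally require neutralizing the obstructions as in~\cite{2002BulloZefran}. You should therefore drop the controllability claim and argue accessibility directly: the two full-rank sum conditions imply that, once the trajectory reaches $B_a$ and jumps, the union $\bigcup_{y\in R_{ab}(y_a)}L_{\tau_E(y)}$ has nonempty interior in $M_b$, which gives the nonempty interior of ${\mathcal R}(x_0,\leq T)$ demanded by the definition of local accessibility.
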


This result can be extended to sufficient conditions for local controllability if the obstructions to controllability are neutralized similarly as written in~\cite{2002BulloZefran}.

\subsection{Example}\label{SubS:Ex}

We consider an example here to illustrate the result in Theorem~\ref{Corol:Control}. Let us consider a submarine under the sea or a sea water animal that depending on the depth can only move in one direction because of the tides. To simplify the model, we only consider that the submarine can move vertically and horizontally, but it can be easily extended to the real three--dimensional space. This model has two modes, one allows to move to the right at a particular depth and the other one allows to move to the left, as shown in Figure~\ref{Fig:Tides}.
At two particular positions of the $OX$--axis the submarine can ``instantaneously"  goes up and down as much as necessary.

 \begin{figure}[h!]\centering

\includegraphics[scale=0.6]{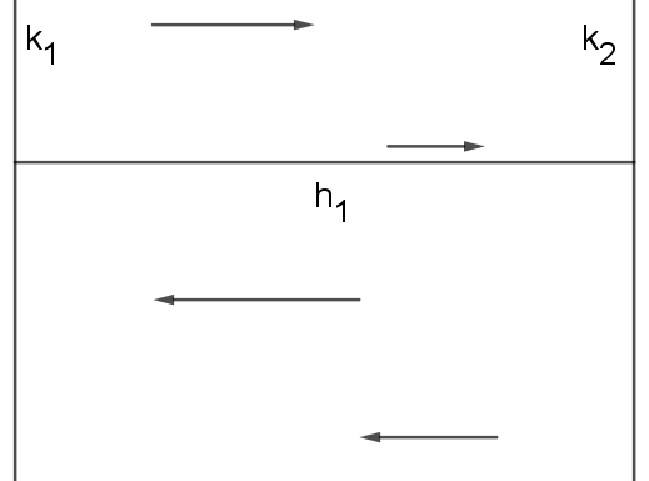}
\caption{Setting of the geometric hybrid control system.} \label{Fig:Tides}
\end{figure}

Formally speaking, the geometric hybrid control system is given by the six--tuple $(\{1,2\},M\times U,M,\rho,D,R)$, where
\begin{itemize}
\item $M_1=\{(x,y)\in \mathbb{R}^2\, | \, k_1\leq x \leq k_2, \; y> h_1\}$ for fixed numbers $h_1,\, k_1, \, k_2\in \mathbb{R}$ and $U_1=\mathbb{R}^{>0}$ only includes strictly positive numbers;
\item $M_2=\{(x,y)\in \mathbb{R}^2\, | \,  k_1\leq x \leq k_2, \; y\leq h_1\}$ for fixed numbers $h_1,\, k_1, \, k_2\in \mathbb{R}$ and $U_2=\mathbb{R}^{<0}$ only includes negative numbers;
\item $\rho_1(x,y,u)=(x,y,u,0)$ and $\rho_2(x,y,u)=(x,y,u,0)$;
\item $D=(M_1\times U_1)\cup (M_2\times U_2)$;
\item The jump map is given by:
\begin{eqnarray*}
R\colon D&\rightrightarrows &D\\
(1,k_2,y^-,u^-)& \rightrightarrows & \{(2,k_2,y^+,u^+)\, |\, (k_2,y^+,u^+)  \in M_2\times U_2\, , \, y^+\leq h_1\},\\
(2,k_1,y^-,u^-)&  \rightrightarrows & \{(1,k_1,y^+,u^+)\, | \,  (k_1,y^+,u^+) \in M_1\times U_1\, , \,  y^+>h_1\} \, .
\end{eqnarray*}
\end{itemize}

The control systems for both states are single-input linear control system on a configuration manifold of dimension 2. The two systems independently are clearly not accessible, neither controllable. However, as a geometric hybrid control system it satisfies the conditions in Theorem~\ref{Corol:Control} because the tangent space of the leaves at jump points is $\partial /\partial x$ and the tangent space of the jump set is given by $\partial /\partial y$.

It can be checked that the global controballity is obtained by using in particular the following jump map from any initial condition to the final condition $(x_f,y_f)$:
\begin{equation*}
 R(a,k_2,y,u)=\left\{ \begin{array}{lcl} (1,k_2,y_f ,-u)& \mbox{if} &x_f\in M_1 \mbox{ and } (k_2,y)\in M_2, \\
(2,k_2,h_1,-u) & \mbox{if} &x_f\in M_1 \mbox{ and } (k_2,y)\in M_1 ,\\
(1,k_2,h_1+|y|,-u) & \mbox{if} &x_f\in M_2 \mbox{ and } (k_2,y)\in M_2, \\
(2,k_2,y_f,-u) & \mbox{if} &x_f\in M_2 \mbox{ and } (k_2,y)\in M_1.
 \end{array}\right.
\end{equation*}

\section{Future work}

We have introduced a new characterization to gain controllability for hybrid control systems by exploiting the geometry of the jump sets. No similar results are known in the literature to our best knowledge. As appears along the paper, the hybrid nature of the system adds a complicate caseload. The geometric study of the trajectories of hybrid control systems developed here sets the foundations to construct geometric integration methods for hybrid control systems. Some attempts to construct geometric integration relies on introducing a discretization of the time interval called hybrid time interval. Thus it is the time and not the states that determines when to change to another discrete mode \cite{SinaSymmetry, SinaVariational}. There is a clear difficulty if the states are discretized because the constraints of the jump set could not be satisfied precisely. All that will be part of the future research lines to follow.

\section*{Acknowledgements}

The authors have been partially supported by Ministerio de Econom\'ia y Competitividad (MINECO, Spain) under grant MTM 2015-64166-C2-2P. MBL and DMdD acknowledge financial support from the Spanish  Ministerio de Econom\'ia y Competitividad , through the   research grants MTM2013-42870-P, MTM2016-76702-P and ``Severo Ochoa Programme for Centres of Excellence'' in R\&D (SEV-2015-0554). MBL has been financially supported by ``Programa propio de I+D+I de la Universidad Polit\'ecnica de Madrid: Ayudas dirigidas a j\'ovenes investigadores doctores para fortalecer sus planes de investigaci\'on". JC and SM have been financially supported by AFOSR Award FA9550-15-1-0108 and AFOSR Award FA9550-18-1-0158, respectively. MCML acknowledges the financial support from the Spanish Ministerio de Econom\'ia y Competitividad project MTM2014–54855–P, the Spanish  Ministerio de Ciencia Innovaci\'on y Universidades project  PGC2018-098265-B-C33 and from the Catalan Government project 2017–SGR–932.

\appendix

\section{Classical notion of a hybrid system}\label{App:HS}

We define here the most classical notion of hybrid systems that can be found in the literature~\cite{Liberzon,VanDerSchaft} to make clear that the geometric framework introduced in this paper also includes the classical notions.

 \begin{defn} A \textit{hybrid system} is a six-tuple $(A,{\mathcal M},F,E,{\rm Guard}, {\rm Reset})$ where
\begin{itemize}
\item $A$ is a finite set of discrete modes. They are the vertices of a graph and are usually natural numbers, that is, $A\subseteq \mathbb{N}$.
\item ${\mathcal M}$ is an $n$-dimensional manifold. It is the continuous state space of the hybrid system in which the continuous state variables $x$ take their values. There is a mapping ${\rm Dom}\colon A \rightrightarrows {\mathcal M}$ called domain such that it assigns to each discrete mode the set ${\rm Dom}(a)\subseteq {\mathcal M}$ where the continuous variables take values.
\item $F$ is a mapping $F\colon A\times {\mathcal M} \rightarrow T{\mathcal M}$ that assigns to each discrete mode $a$ in $A$ a vector field $F_a$ to determine the dynamics of the continuous state. More generally, it could be a mapping that assigns to each discrete mode a set of differential algebraic equations relating the continuous state variables with their time-derivatives.
\item $E\subseteq A\times A$ is a finite set of edges called transitions which determine the possible switchings between discrete modes.
\item
${\rm Guard}\colon E \rightrightarrows  \mathcal{M}$ is a set--valued map that assigns to each edge $(a,b)$ the subset ${\rm Guard}(a,b)$ of ${\mathcal M}$ where the continuous state variable must be to jump from the discrete mode $a$ to $b$.
\item
${\rm Reset}\colon E\times {\mathcal M}\rightrightarrows {\mathcal M}$ is a set--valued map that assigns to each edge $(a,b)$ and a point $x$ in ${\rm Guard}(a,b)$ the set of points where the continuous state $x$ jumps to.
\end{itemize}
\label{Defn:ClassicalHS}
\end{defn}

The arrow $\rightrightarrows$ in the maps Guard and Reset indicates that these maps are set--valued.
All the above elements can be summarized in a graph where the vertices are the discrete modes, the edges are the transitions between discrete modes. At each vertex $a$ in $A$ the continuous state variable takes values in ${\rm Dom}(a)$. The edge $(a,b)$ is only active if the continuous state variable lies in ${\rm Guard}(a,b)$. From a point $x$ in ${\rm Guard}(a,b)$ the edge $(a,b)$ takes the point $x$ to a point in ${\rm Reset}(a,b,x)$.

All the elements in this classical definition are included in the geometric definition of hybrid systems given in Section~\ref{Sec:GenHybSyst} because ${\rm Guard}(a,b)={\rm Dom} R_{ab}$ and ${\rm Reset}(a,b,\cdot)=R_{ab}$.

\subsection{Geometric hybrid systems}\label{Sec:GHS}

According to~\cite{GoebelPaper,GoebelBook} the classical definition of a hybrid system (HS) on $\mathbb{R}^n$ can be reduced to a
tuple $(C,F,B,G)$ where $F\colon C\subseteq \mathbb{R}^n \rightrightarrows \mathbb{R}^n$ is a set--valued map defining a differential inclusion
\begin{equation*}
 \dot{x}\in F(x), \quad x\in C,
\end{equation*}
and
$G\colon B\subseteq \mathbb{R}^n\rightrightarrows \mathbb{R}^n$ is a set--valued map defining a set--valued discrete-time systems
\begin{equation*}
 x^+\in G(x), \quad x\in B.
\end{equation*}
Here $x^+$ denotes the point where the continuous state variable jumps to.
Instead of having set--valued maps, we could just have maps $F$ and $G$ to define
\begin{itemize}
\item the flow condition: $\dot{x}=F(x)$,
\item the jump condition: $x^+=G(x)$.
\end{itemize}
In this description the discrete modes are not identified. In the following definition of hybrid systems on manifolds the discrete modes at each moment are obtained by means of a projection.

\begin{defn} A \textit{hybrid system} (HS) on a global space 
$X$ is a six-tuple $(X,A,C,F,B,G)$ where
\begin{itemize}
 \item $A$ is a finite set of discrete modes and it is a subset of $\mathbb{N}$,
\item $C$ and $B$ are subsets of $X$;
\item $\pi_C\colon C \rightarrow A$ is a fibration onto the discrete modes such that for each $a$ in $A$, $C_a$ is a differentiable
manifold of $X$;
\item $\pi_B\colon B \rightarrow A$ is a fibration onto the discrete modes such that for each $a$ in $A$, $B_a$ is a differentiable
manifold of $X$;
\item $F\colon C\rightarrow TC$ is a map such that for each $a$ in $A$, $F_a\colon C_a \rightarrow TC_a$ is a vector field on $C_a$;
\item $G\colon B \rightarrow X$ is a jump map.
\end{itemize}
\label{Defn:HSonX}
\end{defn}
Note that $C_a$ and $B_a$ could be differentiable manifolds with corners, boundaries, etc.

Hence, we could think of $C$ as $A\times M$ where $A$ corresponds to the discrete modes and
$M$ is the manifold where the continuous states evolve. The jump map could be defined from $B$ to $B \cup C$. However,
by considering $X$ instead of $B \cup C$ we accept that the trajectory might suddenly stop because it has jumped to an
isolated point where nor the flow condition neither the jump condition can be applied.

Definition~\ref{Defn:ClassicalHS} can be rewritten using the elements in Definition~\ref{Defn:HSonX} because it is only necessary to take:
\begin{itemize}
 \item $C_a={\rm Dom}(a)$, $C=\bigcup_{a\in A} \{a\}\times {\rm Dom}(a)$,
\item $B_a=\bigcup_{a' \mbox{ s.t. } (a,a')\in E} {\rm Guard} (a,a')$, $B=\bigcup_{a\in A} \{a\}\times B_a$,
\item $G(a,x)=\bigcup_{a' \mbox { s.t. } x\in {\rm Guard}(a,a')} (a',{\rm Reset}(a,a',x))$.
\end{itemize}
Note that $G$ could be a set--valued map as considered in~\cite{GoebelPaper,GoebelBook}.

\bibliographystyle{plain}
\bibliography{references}
\end{document}